\documentclass[a4paper,10pt, notitlepage]{article}
\topmargin=0cm
\oddsidemargin=0cm
\textwidth=16cm
\textheight=23cm

\usepackage[T1]{fontenc}
\usepackage[cp1250]{inputenc}
\usepackage[english]{babel}
\usepackage{amsfonts}
\usepackage{amsmath}
\usepackage{color}
\usepackage{enumerate}
\usepackage{graphicx}
\usepackage{multicol}
\usepackage{longtable}
\usepackage{caption}
\usepackage{subcaption}

\newtheorem{definition}{Definition}[section]
\newtheorem{theorem}{Theorem}[section]

\newtheorem{coro}[theorem]{Corollary}

\newenvironment{proof}{\textsc{Proof.} }{\begin{flushright}$\Box$\end{flushright}}
\newenvironment{remark}{\emph{Remark.} }{\par}

\newcommand{\hnorm}[1]{\left|\, #1\, \right|}

\newcommand{\xnorm}[2]{\left\|\, #1\, \right\|_{#2}}

\newcommand{\dual}[1]{{#1}^*}

\newcommand{\Ha}[1]{H^{#1}(\Omega)}

\newcommand{\Lp}[2]{L^{#1}(#2)}
\newcommand{\Ce}[1]{C([0,T];#1)}
\newcommand{\eLp}[2]{L^{#1}(0,T;#2)}

\newcommand{\Cont}[1]{C(#1)}

\newcommand{\Lipfunc}[1]{Lip(#1)}

\newcommand{\R}{\mathbb{R}}

%=============================================
%BIBLIOTEKA: KOMENDY DO TWORZENIA LIST ZALOZEN
%=============================================

%Ponizsze komendy sluza do tworzenia list zalozen, z mozliwoscia ich ladnego wyrownywania
%oraz z numerowaniem roznych wariantow zalozen.
%Automatyzuja one numerowanie wielu list zalozen jednym ciagiem numeracji.
%Wykorzystuja one dodatkowe wskazniki oraz dlugosci, ktore musza byc w zwiazku z tym 
%zadeklarowane na poczatku ciala dokumentu:
%\newcounter{tempenumi}
%\setcounter{tempenumi}{0}

%Sa trzy typy list: asmlisti, asmlistii, asmlistsub

%ponizsze dokumenty definiuja style poszczegolnych typow list 

\newcommand{\asmstylei}[1]{(\Roman{#1})}
\newcommand{\asmstyleii}[1]{\alph{#1})}

%Ponizsza komenda sluzy do numerowania poszczegolnych elementow list typow asmlisti oraz asmlistsub

\newcommand{\asmlabeli}[1]
{\newcounter{#1}\setcounter{#1}{\value{enumi}}}

%-------------------
%ENVIRONMENT \ASMLISTI
%Robi liste podstawowa zalozen

\newenvironment{asmlisti}{
\begin{list}{\asmstylei{enumi}}{\usecounter{enumi}}
\setcounter{enumi}{\value{tempenumi}}
}
%tu cialo listy, z uzyciem polecenia \item
{
\setcounter{tempenumi}{\value{enumi}}
\end{list}
}

%-------------------
%ENVIRONMENT \ASMLISTII
%Robi podliste

%tu cialo listy, z uzyciem polecenia \item
{
\end{list}
}

%-------------------
%ENVIRONMENT \ASMLISTSUB
%Robi liste zalozen uwzgledniajacych rozne warianty zalozenia

%tutaj tekst listy z komenda \item na poczatku kazdego zalozenia
{
\end{list}
}

%---KONIEC BIBLIOTEKI---
%=======================

\newcommand{\tcr}[1]{{#1}}
\newcommand{\tcb}[1]{{#1}}

\begin{document}
\renewcommand{\theequation}{\thesection.\arabic{equation}}
\newlength{\asmlength}
\newcounter{tempenumi}
\setcounter{tempenumi}{0}

%%%FRONTMATTER
\providecommand{\sep}{, }

\newcommand{\fmauthor}{Grzegorz Dudziuk}
\newcommand{\fminstitution}{Institute of Applied Mathematics and Mechanics, University of Warsaw}
\newcommand{\fmaddress}{Banacha 2, 02-097 Warsaw, Poland}
\newcommand{\fmemail}{grzegorz.dudziuk@mimuw.edu.pl}

\newcommand{\fmtitle}{The model of closed-loop control by thermostats: properties and numerical simulations}
\newcommand{\fmabstract}{
A closed-loop control of a reaction-diffusion type process is introduced. The control system consist of a finite number of control and measurement devices. The measurement devices collect information about the current state of the process. The control devices influence the process, responding to data obtained from the measurement devices. Each control device takes into account the data from all measurement devices. The rule of accounting the data from measurement devices by a single control device involves defining suitable weights for each pair of one control device and one measurement device. A weight reflects how important is a given measurement device to a given control device. The aim of this control system is to bring the process possibly close to a user defined reference state or trajectory.

We are interested in a situation where the user can adjust the control system by choice of the control and measurement devices and the weights. For this reason, one of the aims of the preset work is to study the behavior under perturbations of these elements for the mathematical model realizing the above control concept. Moreover, we formulate and justify results concerning existence and uniqueness of solutions of the investigated model. Finally, numerical prototypes illustrating properties of the model are presented. }
\newcommand{\fmkeywords}{closed-loop control\sep thermostats\sep stability analysis\sep numerical simulations}
\newcommand{\fmmsc}{35-04\sep 35A02\sep 35B20\sep 35K10\sep 35K58\sep 35Q93} %this is 2010 MSC
\newcommand{\fmdate}{May 2013}

\newcommand{\fmacknowledgement}{
The author would like to thank {\sc Marek Niezg{\'o}dka} from Interdisciplinary Centre for Mathematical and Computational Modelling, University of Warsaw, for several encouraging conversations concerning the subject of this paper.

As a Ph.D. student in \textit{Ph.D. Programme: Mathematical Methods in Natural Sciences}, the author obtained support from the \textit{International Ph.D. Projects} program of Foundation for Polish Science financed within the \textit{Innovative Economy Operational Programme} 2007-2013 funded by European Regional Development Fund.

The the results of the present paper will be included in author's Ph.D. thesis (in preparation).
}

\begin{titlepage}

\title{\fmtitle}
\author{{\sc \fmauthor}\\ \\ \fminstitution\\ \fmaddress \\ \\ \fmemail}
\date{\fmdate}

\maketitle
\thispagestyle{empty}

\begin{abstract}
\fmabstract
\end{abstract}

\textit{Keywords:} \fmkeywords

\textit{2010 MSC:} \fmmsc

\vspace{15pt}

\begin{center}
 PREPRINT
\end{center}

\begin{figure}[b]
\begin{center}
\includegraphics[width=12cm]{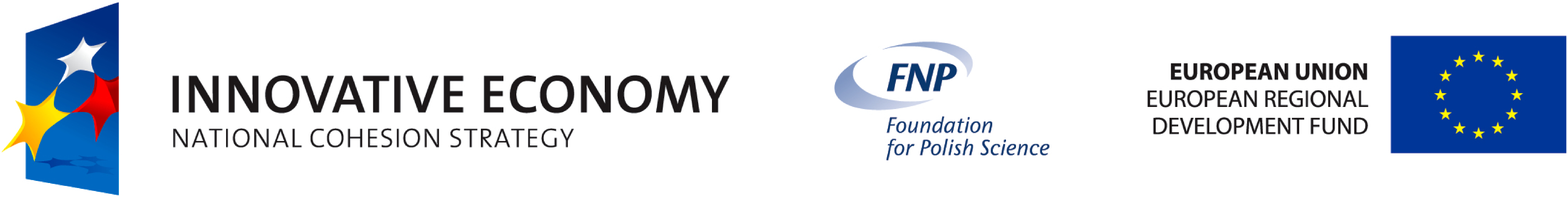}
\end{center} 
\end{figure}

\end{titlepage}

%%%INTRODUCTION --- SETTING THE MODEL
\section{Introduction}\setcounter{equation}{0}
We would like to present and analyze a closed loop control by a finite number of control and measurement devices for a process governed by a semilinear reaction-diffusion equation. The measurement devices located in the domain of the process gather the data about the current state of the process. The control devices supply energy into the process domain basing on the information obtained from the measurement devices. The purpose of the control system is to keep the evolution of the process possibly close to some prescribed evolution demanded by the user. 

For the above control concept we take into consideration the following mathematical model:
\begin{equation}
\left\{\begin{array}{ll}
y_t(x,t) - D\Delta y(x,t) = f(y(x,t)) + \sum_{j=1}^J g_j(x)\kappa_j(t) & \textrm{on $Q_T$}\\
\frac{\partial y}{\partial n} = 0 & \textrm{on $\partial\Omega\times (0,T)$}\\
y(0,x)=y_0(x) & \textrm{for $x\in\Omega$}
\end{array}\right.
\label{eqnmain1}\end{equation}
together with
\begin{equation}
\left\{\begin{array}{ll}
\beta_1\kappa_1'(t) + \kappa_1(t) = W_1(y(\,.\,,t),y^*(\,.\,,t))& \textrm{on $[0,T]$}\\
\vdots &\vdots\\
\beta_J\kappa_J'(t) + \kappa_J(t) = W_J(y(\,.\,,t),y^*(\,.\,,t))& \textrm{on $[0,T]$}\\
\kappa_j(0)=\kappa_{j0}\in\R & \textrm{for $j=1,\ldots,J$} 
\end{array}\right.
\label{eqnmain2}\end{equation}
where $Q_T=\Omega\times (0,T)$, $\Omega$ is a bounded domain in $\R^d,d\geq 1$, with sufficiently regular boundary and the unknown is the sequence $(y,\kappa_1,\ldots,\kappa_J)$ for $y\colon Q_T\rightarrow\R$, $\kappa_j\colon [0,T]\rightarrow\R$. The term $f\colon\R\rightarrow\R$ is a given nonlinearity, $T,D,\beta_1,\ldots,\beta_J>0$ are given constants, $y^*\colon Q_T\rightarrow\R$, $g_j\colon\Omega\rightarrow\R$ and the functionals $W_j$ are of the form
\begin{equation}
W_j(y(\,.\,,t),y^*(\,.\,,t))=\sum_{k=1}^K \alpha_{jk} w_k\left( \int_\Omega h_k(x) (y(x,t) - y^*(x,t)) dx )\right)
\label{eqnmain3}\end{equation}
where $\alpha_{jk}\in\R$, $w_k\colon\R\rightarrow\R$ and $h_k\colon\Omega\rightarrow\R$.

In system (\ref{eqnmain1}) - (\ref{eqnmain3}), the function $y^*$ describes \emph{a reference trajectory} --- the purpose of the introduced control system is to stabilize the reaction-diffusion process as close as possible to the reference trajectory $y^*$. $h_k$ functions characterize \emph{the measurement devices} collecting the data on the current state of the process. \emph{The measurement value} is the term $\int_\Omega h(y-y^*)\:dx$ entering the right hand side of (\ref{eqnmain3}). 
$g_j$ are constant in time functions characterizing \emph{the control devices}. The actions of control devices are moderated in time by functions $\kappa_j$. Thus $\kappa_j$ can be understood as \emph{the signal} for $j$-th control device, produced by some \emph{signal generator}. $\kappa_j$ are modeled with ODEs in (\ref{eqnmain2}) meaning that the changes of signal in time are continuous. The measurement values returned by the measurement devices are processed by functions $w_k$. The processed measurement data are taken into account by the generator of signal for $j$-th control device with \emph{weights} $\alpha_{jk}$, $k=1,\ldots,K$. For the $w_k$ functions, a natural example is $w_k(s)=-sgn(s)$. In this case, function $w_k$ returns simple information understood by the signal generators as ,,heat'' or ,,cool'', depending on whether the value of $k$-th measurement indicates that the process values exceed the reference values or are below them. Hence, the $w_k$ functions can be understood as functions describing a switching mechanism working in the system. We will call $w_k$ \emph{the switching functions}.

It is common that in the real life situations the user of the control system can decide about amount and location of the control devices as well as the measurement devices and can choose the corresponding weights. Thus in the present work we will understand \emph{the control} as the set of all control devices, measurement devices and weights appearing in  (\ref{eqnmain1}) - (\ref{eqnmain3}), i.e. the control is $\left( {g_j},{h_k},{\alpha_{jk}}\right)_{j=1,\ldots,J}^{k=1,\ldots,K}$.

In the preset work, the closed-loop control concept in system (\ref{eqnmain1}) - (\ref{eqnmain3}) will be called \emph{the control by thermostats} or \emph{the thermostats control system}. 
The idea of control by thermostats of processes governed by parabolic PDEs was introduced in \cite{gs1}, \cite{gs2}. There, a linear heat flow was controlled by thermostats. 
Later the concept of thermostats was used to control many other types of processes. In this context, phase transition processes described by various versions of the Stefan model were considered, see e.g. \cite{fh}, \cite{hns}, \cite{cgs}. in \cite{cc}, \cite{gj1}, \cite{dn} processes described by semilinear equations were controlled by thermostats. In \cite{gj2} a control by thermostats for linear heat flow was considered. 

Not only the controlled process varies in the mentioned works. The control by thermostats also has its variants. One of the point where the differences in the \tcb{thermostats control system} can occur is the placement of actions of the control devices. In all indicated references, except for \cite{dn}, the control devices are acting through the boundary of the process. In \cite{dn} and the present work the control devices create a power spot distributed in the domain. Also, various versions of the switching mechanism working in the \tcb{thermostats control system} can be found in the literature. A frequently encountered case is that hysteresis in the work of the switching mechanism is assumed to be present. See \cite{hns}, \cite{cc}, \cite{gj2} for an applications of the so-called relay switch hysteresis or \cite{fh}, \cite{cgs}, \cite{cc}, \cite{gj1} for the Preisach hysteresis model. In \cite{hns}, \cite{cgs} or \cite{dn} the case of no hysteresis effects in the switching mechanism was addressed. In the present work, we also don't assume hysteresis effects. 

The version of the \tcb{thermostats control system} investigated in this work is very similar to that in \cite{dn} or one of the cases considered in \cite{hns}. However, in comparison to those works, we make additional assumptions for the switching functions to get stronger results. 

An example application of control by thermostats  can be a hyperthermia cancer therapy. In the context of hyperthermia, the  authors of \cite{BSW02} describe an attempt to construct a real closed-loop control system with the thermostats concept similar to ours. Moreover, the concept of distributed power spots created by the control devices is consistent with this application. Namely, the power spots inside the domain can be understood as effects of deeply penetrating media, such as electromagnetic waves or ultrasounds. These media can be utilized as heating media in hyperthermia treatment --- see \cite{Zee02}, \cite{CDSD10}. 
Other possible application for the subject control concept is the area of controlling the free boundary problems. As already mentioned, a control by thermostats for the Stefan type state equations was taken into account in the literature. 
But the potential application fields are much more numerous. The numerical prototypes in the further parts of the present work suggest that system (\ref{eqnmain1}) - (\ref{eqnmain3}) has interesting properties. Namely, its solutions are resistant to perturbations of the initial condition and it is well suited for the task of preserving unstable states of the process. 
These are natural properties of the \tcb{thermostats control system} being a direct consequence of the closed-loop control idea. In model (\ref{eqnmain1}) - (\ref{eqnmain3}) the actions of the control devices are not \textit{a priori} prescribed. The \tcb{thermostats control system} tracks the evolution of the process in real time and brings corrections to the actions of the control devices, if necessary. This makes the subject control concept applicable for all the engineering fields where the immunity to the perturbations of the initial condition or a good behavior in unstable situations are crucial.

The main objective of our present research is the problem of optimal choice of locations of control and measurement devices in model (\ref{eqnmain1}) - (\ref{eqnmain3}). 
More precisely, assume that functions $\sigma_g,\sigma_h:\R^d\rightarrow\R$ and points $\hat{\upsilon}_1,\ldots,\hat{\upsilon}_J$ and $\hat{\eta}_1,\ldots,\hat{\eta}_K$ in $\R^d$ are given. We are interested in a situation where the control and measurement devices are given by formula $g_j(x) := \sigma_g(x - \hat{\upsilon}_{j})|_\Omega$ and $h_j(x) := \sigma_h(x - \hat{\eta}_{k})|_\Omega$ for $j=1,\ldots,J$, $k=1,\ldots,K$. 
Having fixed $\sigma_g$ and $\sigma_h$ and weights $\alpha_{j,k}$, the set of points $\hat{\upsilon}_1,\ldots,\hat{\upsilon}_J$ and $\hat{\eta}_1,\ldots,\hat{\eta}_K$ can be understood as \emph{a control parameter}, since it determines the control uniquely. An example choice of $\sigma_g$ and $\sigma_h$ can be $\sigma_h(x) = \sigma_g(x) = \mathbf{1}_{B(0,r)}(x)$ for certain $r>0$. The problem is to choose a control parameter in an optimal manner assuming that $\sigma_g$, $\sigma_h$ and $\alpha_{j,k}$ are fixed. The optimality criterion can be for example to minimize the cost functional given by $\int_{t_1}^T \int_\Omega \hnorm{y-y^*}\:dx\:dt$ for certain $0\leq t_1<T$. This cost functional is simple but reflects the idea of keeping the process close to the reference trajectory $y^*$.

To our knowledge, mathematical analysis of the above optimal localization problem was not performed so far, neither for model (\ref{eqnmain1}) - (\ref{eqnmain3}) nor for other mentioned models involving control by thermostats. Many other questions were posed, including the existence and uniqueness of solutions (see  \cite{gs1}, \cite{gs2}, \cite{hns}, \cite{cgs}, \cite{cc}), the existence of periodic solutions (see \cite{gj1}, \cite{gj2}) or the existence of a global attractor (see \cite{gj1}). 
%The only optimization attempt for models involving control by thermostats that we have encountered is the problem of choosing the optimal hysteresis law, for the variant of control by thermostats where a switching mechanism with hysteresis was considered --- see e.g. \cite{fh}.
We have encountered in the literature only one type of optimization problems for models involving control by thermostats. It concerned the problem of choosing the optimal hysteresis law, for the variant of control by thermostats where a switching mechanism with hysteresis was considered --- see e.g. \cite{fh}.

The total material necessary for the complete treatment of the optimal location problem is lengthy --- it covers variety of questions, from the stability analysis, through formulating the suitable optimal control problem and deriving the optimality conditions, up to choosing and implementing a proper optimization method. It would be too much to put all these topics into a single paper. Hence we have decided to contain here only a part of them.

The main objective of the present paper is to give estimates and stability analysis for solutions of system (\ref{eqnmain1}) - (\ref{eqnmain3}). The stability analysis concerns, in particular, the behavior of the subject system under perturbations of the control. This forms background for further stages of our research, concerning directly the optimal location problem. Also, the results on the existence and uniqueness for system (\ref{eqnmain1}) - (\ref{eqnmain3}) are justified in the present work. In addition, we present some numerical prototypes illustrating behavior of system (\ref{eqnmain1}) - (\ref{eqnmain3}).

The part of our research concerning directly the problem of choosing optimal locations for control and measurement devices is mostly completed now. Its results will be described in detail in our next paper (in preparation).

The matter of the current work is split into two parts --- one of them concerns rigorous mathematical analysis, the other focuses on numerical computations.

The rigorous analysis part is contained in Section \ref{sec:CP.1.continuity}. We prove theorems concerning estimates and stability in the suitable space for solutions of (\ref{eqnmain1}) - (\ref{eqnmain3}). The stability is shown w.r.t. both control and initial condition. These estimate and stability results are the key theorems in the present work. Moreover, the existence of solutions for Lipschitz continuous bounded switching functions is shown, by means of the Schauder fixed-point theorem. Next, we use the result concerning estimates for solutions of (\ref{eqnmain1}) - (\ref{eqnmain3}) to generalize the existence result to the class of unbounded Lipschitz switching functions. The uniqueness of solutions is inferred by the result on stability of solutions of (\ref{eqnmain1}) - (\ref{eqnmain3}) w.r.t. initial condition. \tcr{We conclude Section \ref{sec:CP.1.continuity} by proving the weak subsequential stability of system (\ref{eqnmain1}) - (\ref{eqnmain3}) when a sequence of controls converging only weakly is given.}

The numerical computations part is the subject of Section \ref{sec:CP.numexamples}. In  Section \ref{sec:CP.numexamples} we intend to illustrate certain properties of the control by thermostats which make this control concept valuable from the point of view of applications. We will describe a numerical experiment concerning the behavior of the subject control concept in a situation when the reference trajectory $y^*$ is assumed to be an unstable state of the system. What is emphasized there is the simplicity of adjustment of the \tcb{thermostats control system} which is sufficient to make this system efficient for the task of preserving the unstable state. Next, we present some simulations which are intended to test the behavior of the \tcb{thermostats control system} under perturbations of the initial condition. The obtained results suggest that there exists quite strong immunity of the system to this kind of perturbations. We conclude this part of the paper with an experiment for comparing the evolution of the process with two various amounts of the control and measurement devices of the same size. The size of the devices can be understood as $\textrm{diam}(\textrm{supp}(g_j))$ or $\textrm{diam}(\textrm{supp}(h_k))$ respectively. The observation that could be expected is made: smaller amount of the devices results in decreased quality of output of the \tcb{thermostats control system}. This brings us to the question on optimal locations of the control and measurement devices.

Before we proceed to the analytical part of our work (Section \ref{sec:CP.1.continuity}) and to describing the numerical simulations (Section \ref{sec:CP.numexamples}), in Section \ref{sec:CP.1.main} we give some preliminaries necessary for precise setting of the problem for further treatment.

%%%INTRODUCTION TO THE FIRST CONTROL PROBLEM
\section{Setting the problem --- assumptions and the state operator}\label{sec:CP.1.main}\setcounter{equation}{0}

In this section we give the preliminaries concerning the mathematical assumptions and definitions which are in use in the further parts of the work. 

However, before we pass to the content related to our model directly, it is necessary to introduce some general notation. Assuming, that the domain $\Omega\subset\R^d$ and a measurable set $E\subseteq\R^d$ are given, we define:\\
\renewcommand{\arraystretch}{1.25}
\begin{longtable}{lcp{0.75\textwidth}}
$\xnorm{\,.\,}{X}$					& --- & the norm of a given Banach space $X$,\\
$\left( \,.\,,\,.\, \right)_{H}$					& --- & the scalar product of a given Hilbert space $H$,\\
$\left<\,.\, ,\,.\, \right>_{\dual{X},X}$		&	--- & the natural pairing between $\dual{X}$ and $X$, for a given Banach space $X$; the first argument stands for the element of $\dual{X}$,\\
$\xnorm{\,.\,}{p}$					& --- & the norm of the Lebesgue space $\Lp{p}{\Omega}$, $p\in [1,\infty]$,\\
$\xnorm{\,.\,}{X,q}$				& --- & the norm of the Bochner space $\eLp{q}{X}$, where $X$ is a~Banach space, $q\in [1,\infty]$,\\
$\xnorm{\,.\,}{p,q}$				& --- & the norm of the Bochner space $\eLp{q}{\Lp{p}{\Omega}}$,\\
$\left<\,.\, ,\,.\, \right>$		&	--- & the natural pairing between $\dual{\Ha{1}}$ and $\Ha{1}$; the first argument stands for the element of $\dual{\Ha{1}}$,\\
$\xnorm{\,.\,}{p,E}$					& --- & the norm of the Lebesgue space $\Lp{p}{E}$, $p\in [1,\infty]$.\\
\\
\end{longtable}
\renewcommand{\arraystretch}{1}
In addition, we don't want to bother with separate notation for vector valued functions. Hence we denote the standard norm of $\left(\Lp{p}{\Omega}\right)^d$ or $\eLp{q}{\left(\Lp{p}{\Omega}\right)^d}$ simply as $\xnorm{\,.\,}{p,\mathbb{E}}$ or $\xnorm{\,.\,}{p,q}$, respectively. Similarly, the standard scalar product in $\left(\Lp{2}{\mathbb{E}}\right)^n$ will be denoted as $\left(\,.\,,\,.\, \right)_{\Lp{2}{\mathbb{E}}}$.

Moreover, we denote
\begin{equation}\begin{array}{rcl}
X^0 &=& \Lp{2}{\Omega}\times\R^J\\
X^1 &=& \Lp{2}{Q_T}\times\left(\Lp{2}{0,T}\right)^J\\
X^2 &=& \left\{ (y,\kappa_1,\ldots,\kappa_J):\ y\in\eLp{\infty}{\Lp{2}{\Omega}},\ \nabla y\in \left(\Lp{2}{Q_T}\right)^d, \right.\\
		&&	\left.\ y'\in\eLp{2}{\dual{\Ha{1}}} \textrm{ and } \right.\\
		&&	\left.\ \kappa_j\in\Lp{\infty}{0,T},\ \kappa_j'\in\Lp{2}{0,T} \textrm{ for } j=1,\ldots,J \right\}\\
\end{array}\end{equation}
The derivatives $y'$ and $\kappa_j'$ in the definition of the space $X^2$ is understood in the sense of vector-valued distributions and $\nabla y$ refers to the vector of the weak partial derivatives of $y$ w.r.t. the space variables.

Now, we introduce the assumptions concerning system (\ref{eqnmain1}) - (\ref{eqnmain3}) that will be required for our considerations:
\begin{asmlisti}
		\item\asmlabeli{asm02} $T>0$ and $\Omega\subset\R^d$ is a bounded domain, such that the Rellich-Kondrachov theorem for the embedding $\Ha{1}\subset\Lp{2}{\Omega}$ is valid, 
		\item\asmlabeli{asm03} $K$ and $J$ are given positive natural numbers,
		\item\asmlabeli{asm04} $f$ is globally Lipschitz continuous with a constant $L$ and $f(0)=:f_0$,
		\item\asmlabeli{asm06} $w_k$ is globally Lipschitz continuous with a constant $L_k$ and $w_k(0) =:w_{k0}$, for all $k=1,\ldots,K$,
		\item\label{asm07} $D>0$, $\beta_j>0$ for all $j=1,\ldots,J$,
		\item\label{asm10} $y^*\in\eLp{\infty}{\Lp{2}{\Omega}}$,
		\item\asmlabeli{asm12} $y_0\in\Lp{2}{\Omega}$, $\kappa_{j0}\in\R$ for $j=1,\ldots,J$.
\end{asmlisti}
For assumption \asmstylei{asm02}, a bounded domain satisfying the cone condition is sufficient (see \cite[Th. 6.3.]{af}).

\tcr{As mentioned in the introduction}, we will consider the set of control devices, measurement devices and corresponding weights to be the user-eligible components. Hence  the sequence $({g_j},{h_k},{\alpha_{jk}})_{j=1,\ldots,J}^{k=1,\ldots,K}$, where the particular components correspond to the quantities appearing in (\ref{eqnmain1}) - (\ref{eqnmain3}),  will be called \emph{a control}. Moreover, we define the following spaces:
\begin{equation}\begin{array}{llll}
U = U_g\times U_h\times U_\alpha,\quad & U_g = \left(\Lp{2}{\Omega}\right)^J, & U_h = \left(\Lp{2}{\Omega}\right)^K, & U_\alpha = \R^{KJ}\\
\end{array}
\label{eqn206}\end{equation}
$U$ will be called \emph{the control space}. We equip it with standard product topology and scalar product. Arbitrary sufficiently regular control  $({g_j},{h_k},{\alpha_{jk}})_{j=1,\ldots,J}^{k=1,\ldots,K}$ can be interpreted as an element of $U$ and \textit{vice versa} --- arbitrary element $\hat{u}\in U$ gives a control. Hence, for reasons of convenience, let us also develop notation concerning the elements of the control space $U$ . For a given element $\hat{u}\in U$ we denote the coordinates of $\hat{u}$ in the following way:
\begin{displaymath}
\hat{u}=(\hat{u}_{g_j},\hat{u}_{h_k},\hat{u}_{\alpha_{jk}})_{j=1,\ldots,J}^{k=1,\ldots,K}\qquad 
\end{displaymath}
\begin{displaymath}
\textrm{where}\quad(\hat{u}_{g_1}\ldots,\hat{u}_{g_J})\in U_g,\quad (\hat{u}_{h_1},\ldots,\hat{u}_{h_k})\in U_h,\quad (\hat{u}_{\alpha_{j,k}})_{j=1,\ldots,J}^{k=1,\ldots,K}\in U_\alpha
\end{displaymath}

The following definition of solutions for system (\ref{eqnmain1}) - (\ref{eqnmain3}) will be utilized in the present paper: 
\begin{definition}
$(y,\kappa_1,\ldots,\kappa_J)$ is a weak solution to system (\ref{eqnmain1}) - (\ref{eqnmain3}) if:
\begin{enumerate}[(a)]
\item $(y,\kappa_1,\ldots,\kappa_J)\in X^2$,
\item\label{def04.06} $y(0) = y_0$ in $\Lp{2}{\Omega}$ and $\kappa_j(0)=\kappa_{j0}$ for $j=1,\ldots,J$,
\item\label{def04.08} $\int_0^T \left<y',\phi \right> + D(\nabla y,\nabla\phi)_{\Lp{2}{\Omega}} + ( - f(y) - \kappa_1g_1 - \ldots - \kappa_Jg_J,\phi)_{\Lp{2}{\Omega}}\:dt = 0$\\
for all $\phi \in \eLp{2}{\Ha{1}}$,
\item\label{def04.10} $\int_0^T \left(\beta_j\kappa_j' + \kappa_j - W_j(y,y^*)\right)\xi \:dt = 0$ for all $\xi\in\Lp{2}{0,T}$, for $j=1,\ldots,J$.
\end{enumerate}
\label{def04}\end{definition} 
The spaces $\Ha{1}\subset\Lp{2}{\Omega}\subset\dual{\Ha{1}}$ form so called evolution triple (see \cite[Chap. 23.4]{zei2a}). Hence, by \cite[Prop. 23.23]{zei2a}, if $(y,\kappa_1,\ldots,\kappa_J)\in X^2$ then $y\in\Ce{\Lp{2}{\Omega}}$. This makes point (\ref{def04.06}) in the above definition meaningful. Verifying the existence and the uniqueness of solutions of system (\ref{eqnmain1}) - (\ref{eqnmain3}) will be one of the aims of the next section.

\begin{remark}
Concerning the weights $\alpha_{j,k}$ in (\ref{eqnmain3}), one can expect an assumption that $\alpha_{j,k}$ are nonnegative and summable to unity over $k=1,\ldots,K$, for all $j=1,\ldots,J$. But this assumption doesn't play any role in our considerations, hence we don't impose it and allow $\alpha_{jk}$ to be arbitrary real numbers. This is reflected in the structure of the control space $U$.
\end{remark}

%%%CONTINUITY PROPERTIES OF THE FIRST CONTROL PROBLEM
\section{Properties of the system}\label{sec:CP.1.continuity}\setcounter{equation}{0}

In this section we derive estimates for solutions of system (\ref{eqnmain1}) - (\ref{eqnmain3}) and investigate stability of  (\ref{eqnmain1}) - (\ref{eqnmain3}) w.r.t. both control and initial condition. \tcr{ We also prove the weak subsequential stability of (\ref{eqnmain1}) - (\ref{eqnmain3}) when the control space is considered with its weak topology.}

The purpose of this analysis is twofold. First, we are interested in the well-posedness of system (\ref{eqnmain1}) - (\ref{eqnmain3}), i.e. we want to infer the existence and uniqueness of solutions to this model. The estimates that will be derived and the stability w.r.t. initial condition will be essential for justification of the theorem on existence and uniqueness of the weak solutions of system (\ref{eqnmain1}) - (\ref{eqnmain3}). Second, the results concerning stability w.r.t. the control variable are useful from the point of view of the optimal control theory. These results will be useful for our further research.

Before the announced estimates for solutions and stability analysis, we start with the existence result for the case when $w_k$ satisfy some restrictions additional to those demanded in assumption \asmstylei{asm06}:
\begin{theorem}
Assume that general assumptions \asmstylei{asm02} - \asmstylei{asm12} hold and $\left(g_j,h_k,\alpha_{jk} \right)_{j=1,\ldots,J}^{k=1,\ldots,K}\in U$. Assume moreover that functions $w_k$ are bounded for $k=1,\ldots,K$. Then system (\ref{eqnmain1}) - (\ref{eqnmain3}) has a weak solution.
\label{theorem04.0}\end{theorem}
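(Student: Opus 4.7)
The plan is to apply the Schauder fixed-point theorem to a decoupling map on $\eLp{2}{\Lp{2}{\Omega}}$. Given $\bar y\in\eLp{2}{\Lp{2}{\Omega}}$, first solve for $j=1,\ldots,J$ the linear ODEs
\begin{displaymath}
\beta_j\kappa_j'(t)+\kappa_j(t)=W_j(\bar y(\cdot,t),y^*(\cdot,t)),\qquad \kappa_j(0)=\kappa_{j0},
\end{displaymath}
which are uniquely solvable by variation of constants. Since by hypothesis each $w_k$ is bounded, the right-hand sides satisfy $|W_j|\le\sum_k|\alpha_{jk}|\xnorm{w_k}{\infty}=:C_j$, and the explicit formula yields
\begin{displaymath}
\xnorm{\kappa_j}{\Lp{\infty}{0,T}}\le|\kappa_{j0}|+C_j=:M_j,
\end{displaymath}
a bound independent of $\bar y$. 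Second, plug the sequence $(\kappa_j)$ into the source of (\ref{eqnmain1}) and solve the resulting semilinear parabolic problem for $y$ under the Neumann condition and datum $y_0$; since $f$ is globally Lipschitz (\asmstylei{asm04}), this has a unique solution in the $X^2$-type class for $y$ by classical theory (Galerkin plus Gronwall). Define $\mathcal T(\bar y):=y$.

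Next derive a uniform a priori bound. Testing the equation for $y$ against $y$ itself, using $|f(y)|\le L|y|+|f_0|$, Young's inequality and Gronwall's lemma, one obtains
\begin{displaymath}
\xnorm{y}{\eLp{\infty}{\Lp{2}{\Omega}}}+\xnorm{\nabla y}{2,2}+\xnorm{y'}{\eLp{2}{\dual{\Ha{1}}}}\le R,
\end{displaymath}
with $R$ depending only on $T,D,L,|f_0|,|\Omega|,\xnorm{y_0}{2},\xnorm{g_j}{2}$ and $M_j$, \emph{independent} of $\bar y$. By assumption \asmstylei{asm02} the embedding $\Ha{1}\subset\Lp{2}{\Omega}$ is compact, so the Aubin--Lions lemma applied to the triple $\Ha{1}\subset\Lp{2}{\Omega}\subset\dual{\Ha{1}}$ shows that $\mathcal T$ sends bounded sets of $\eLp{2}{\Lp{2}{\Omega}}$ into relatively compact ones, and $\mathcal T(B_R)\subset B_R$ for $R$ chosen at least as large as the bound above. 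The closed convex hull $K:=\overline{\mathrm{conv}\,\mathcal T(B_R)}$ is then a compact convex subset of $B_R$ invariant under $\mathcal T$.

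Continuity of $\mathcal T$ is the only remaining point. If $\bar y_n\to\bar y$ in $\eLp{2}{\Lp{2}{\Omega}}$, Cauchy--Schwarz gives $\int_\Omega h_k(\bar y_n-y^*)\,dx\to\int_\Omega h_k(\bar y-y^*)\,dx$ in $\Lp{2}{0,T}$ for each $k$; Lipschitz continuity of $w_k$ transfers the convergence through the nonlinearity, so $W_j(\bar y_n,y^*)\to W_j(\bar y,y^*)$ in $\Lp{2}{0,T}$. The variation-of-constants formula then yields $\kappa_j^n\to\kappa_j$ in $\Cont{[0,T]}$, whence $\sum_j g_j\kappa_j^n\to\sum_j g_j\kappa_j$ in $\eLp{2}{\Lp{2}{\Omega}}$. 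A standard Gronwall estimate on the difference of the parabolic equations for $\mathcal T(\bar y_n)$ and $\mathcal T(\bar y)$, exploiting the Lipschitz continuity of $f$, finishes $\mathcal T(\bar y_n)\to\mathcal T(\bar y)$ in $\eLp{2}{\Lp{2}{\Omega}}$. Schauder's theorem then delivers a fixed point $y\in K$; the pair $(y,\kappa_1,\ldots,\kappa_J)$ constructed from it verifies all items of Definition~\ref{def04}.

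The main obstacle is intrinsic to this two-way coupling: the ODEs feed $\kappa_j$ into the PDE and the PDE state feeds back into the ODEs. The boundedness hypothesis on $w_k$ is precisely what breaks this circularity at the level of the a priori estimates, producing a $\kappa_j$ bound independent of $\bar y$ and thus a finite invariant ball for the fixed-point argument. Removing this hypothesis destroys the decoupling, which is presumably why the paper splits off a separate step --- announced in the introduction --- to pass to unbounded Lipschitz $w_k$ by leveraging the estimates obtained for the bounded case.
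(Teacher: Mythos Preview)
Your argument is correct, but the paper's decoupling is dual to yours: it sets up the Schauder fixed point in $\Lp{2}{0,T}$ on the $\kappa$-variable rather than in $\Lp{2}{Q_T}$ on the $y$-variable. Concretely, the paper composes $P=P_3\circ P_2\circ P_1$, where $P_1$ solves the parabolic problem for given $k\in\Lp{2}{0,T}$, $P_2$ evaluates the measurement functional $W$, and $P_3$ solves the linear ODE. Boundedness of $w$ forces the range of $P_2$ into a fixed ball of $\Cont{[0,T]}$; the ODE then maps this ball into a bounded set of $W^{1,2}(0,T)$, which is compact in $\Lp{2}{0,T}$ by the one-dimensional Rellich--Kondrachov embedding. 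So the paper obtains its compact convex invariant set without Aubin--Lions, working only in a space of scalar functions of time. Your route needs the heavier Aubin--Lions machinery in the PDE space, but in exchange your fixed-point map acts on the primary unknown $y$ and the invariant-ball argument is perhaps more transparent. Both approaches exploit the boundedness of $w_k$ in exactly the same way---to produce an a priori bound on the ``output'' of the decoupling that is independent of the ``input''---and your closing remark about why this hypothesis is essential applies equally to the paper's version.
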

Before passing to the proof, we give a short bibliographical remark. Mathematical models in \cite{hns} and \cite{dn} utilize a control by thermostats that is very similar to ours, or in some sense more general since they don't assume that the switching functions are Lipschitz continuous. The proof of existence in \cite{hns} and \cite{dn} consists in use of generalized Kakutani fixed-point theorem. Nevertheless, some differences between our work occurs and the models considered in \cite{hns} and \cite{dn} so our model cannot be viewed as a particular case of the models in \cite{hns} and \cite{dn}. Hence we decided to present the proof of existence here. We will prove the assertion with use of the Schauder fixed-point theorem, which is less general that the generalized Kakutani theorem but sufficient for our purposes.

\noindent\begin{proof}
For the sake of brevity, we will present the proof for $J=K=1$, $\alpha_{1,1}=1$. The proof for the general case follows the same lines.

\newcommand{\solspace}{\mathbb{W}}

Denote $\solspace_y(0,T) = \left\{ y\in\eLp{\infty}{\Lp{2}{\Omega}},\:\nabla y\in\Lp{2}{Q_T},\:y'\in\eLp{2}{\dual{\Ha{1}}}\right\}$ and $\solspace_\kappa(0,T) = \left\{ \kappa,\kappa'\in\Lp{2}{0,T}\right\}$. Consider the following equations:
\begin{equation}\left\{\begin{array}{ll}
 y_t(x,t) - D\Delta y(x,t) = f(y(x,t)) + G(x)k(t) & \textrm{on $Q_T$}\\
 \frac{\partial y}{\partial n} = 0 & \textrm{on $\partial \Omega\times(0,T)$}\\
 y(0) = y_0 & \textrm{on $\Omega$}
\end{array}\right.
\label{eqn1existence}\end{equation}
\begin{equation}\left\{\begin{array}{ll}
 \beta \kappa'(t) + \kappa(t) = W(t) & \textrm{on $[0,T]$}\\
 \kappa(0) = \kappa_{0}
\end{array}\right.
\label{eqn2existence}\end{equation}
\begin{equation}\begin{array}{ll}
 W(t) = w\left( \int_\Omega H(x)\left(Y(x,t) - y^*(x,t)\right)\:dx \right) & \textrm{a.e. on $[0,T]$}\\
\label{eqn3existence}\end{array}\end{equation}
where $k\in\Lp{2}{0,T}$, $W\in C[0,T]$ and $Y\in \Ce{\Lp{2}{\Omega}}$ are given, $G,H\in\Lp{2}{\Omega}$, $\beta>0$, $\kappa_0\in\R$ and $D$, $f$ are as in the assumptions of the theorem.

By solution of (\ref{eqn1existence}) we understand a function $y\in\solspace_y(0,T)$ that satisfies $y(0)=y_0$ and
\begin{equation}
\int_0^T \left<y',\phi \right> + D(\nabla y,\nabla\phi)_{\Lp{2}{\Omega}} + ( - f(y) - Gk,\phi)_{\Lp{2}{\Omega}}\:dt = 0 
\label{soldef1existence}\end{equation}
for all $\phi \in \eLp{2}{\Ha{1}}$. By solution of (\ref{eqn2existence}) we mean a function $\kappa\in\solspace_\kappa(0,T)$ that satisfies $\kappa(0) = \kappa_0$ and 
\begin{equation}
 \int_0^T \left(\beta\kappa' + \kappa - W\right)\xi \:dt = 0
\label{soldef2existence}\end{equation}
for all $\xi\in\Lp{2}{0,T}$. Since $\solspace_y(0,T)\subseteq \Ce{\Lp{2}{\Omega}}$ (see \cite[Prop. 23.23]{zei2a}) and $\solspace_\kappa(0,T)\subseteq \Cont{[0,T]}$ (by Sobolev embedding theorem, see \cite[Th. 4.12]{af}), the initial condition for solutions of both (\ref{eqn1existence}) and (\ref{eqn2existence}) are well defined.

It is well known that the solutions of equations (\ref{eqn1existence}) and (\ref{eqn2existence}) exist and are unique. 

It is also a consequence of known estimates that for $y^1$ and $y^2$ being two solutions of  (\ref{eqn1existence}) corresponding to $k^1\in\Lp{2}{0,T}$ and $k^2\in\Lp{2}{0,T}$ respectively there holds
\begin{equation}
 \xnorm{y^1 - y^2}{\solspace_y(0,T)} \ \leq\ C_1\ \xnorm{G(x)(k^1(t) - k^2(t))}{2,2} \ \leq\ C_2 \xnorm{k^1 - k^2}{\Lp{2}{0,T}}
\label{ineq1existence}\end{equation}
This is rather standard estimate thus we don't prove it here.

Even easier it can be shown that for $\kappa^1$ and $\kappa^2$ being two solutions of (\ref{eqn2existence}) corresponding to $W^1\in\Lp{2}{0,T}$ and $W^2\in\Lp{2}{0,T}$ respectively we have
\begin{equation}
 \xnorm{\kappa^1 - \kappa^2}{\solspace_\kappa(0,T)} \ \leq \ C_3 \xnorm{W^1 - W^2}{\Cont{[0,T]}}
\label{ineq2existence}\end{equation}

Now, we define the following operators. $P_1\colon\Lp{2}{0,T}\rightarrow\Ce{\Lp{2}{\Omega}}$ is an operator assigning the solution of (\ref{eqn1existence}) to given $k\in\Lp{2}{0,T}$. It is well defined since the solution (\ref{eqn1existence}) exists in $\solspace_y(0,T)$, is unique and $\solspace_y(0,T)\subseteq \Ce{\Lp{2}{\Omega}}$. $P_2\colon\Ce{\Lp{2}{\Omega}}\rightarrow\Cont{[0,T]}$ assigns $W$ given by formula (\ref{eqn3existence}) to given $Y\in \Ce{\Lp{2}{\Omega}}$. $P_3\colon\Cont{[0,T]}\rightarrow\solspace_\kappa(0,T)$ assigns the solution of (\ref{eqn2existence}) for given $W\in\Cont{[0,T]}$. It is well defined since the solution  of (\ref{eqn2existence}) exists in $\solspace_\kappa(0,T)$ and is unique.

Proving that $P:=P_3\circ P_2\circ P_1$ has a fixed point in $\Lp{2}{0,T}$ is equivalent to proving the assertion of the theorem for the case of $J=K=1$ and $\alpha_{1,1}=1$, by making the following assignments in the system (\ref{eqnmain1}) - (\ref{eqnmain3}): $\kappa_{10}:=\kappa_{0}$, $w_1:=w$, $g_1:=G$, $h_1:=H$, $\beta_1:=\beta$. In other words, we need to prove that there exists $\kappa\in\Lp{2}{0,T}$ such that $\kappa = P_3(W)$, $W = P_2(Y)$, $Y = P_1(\kappa)$.

By (\ref{ineq1existence}) and (\ref{ineq2existence}) the operators $P_1$ and $P_3$ are continuous. By assumption that $w$ is Lipschitz continuous we also verify the continuity of $P_2$. Let $W^1 = P_2(Y^1)$ and $W^2 = P_2(Y^2)$ for given $Y^1,Y^2\in\Ce{\Lp{2}{\Omega}}$. Then:
\begin{displaymath}\begin{array}{rcl}
 \xnorm{W^1-W^2}{\Lp{\infty}{0,T}} &\leq& \Lipfunc{w}\hnorm{\int_\Omega H(x)(Y^1(x,t)-Y^2(x,t))\:dx} \\
  &\leq & \Lipfunc{w}\xnorm{H}{2}\xnorm{Y^1 - Y^2}{2,\infty}
\end{array}\end{displaymath}

Next, recall the assumption that $w$ is bounded. We denote $C_4:=\xnorm{w}{\Lp{\infty}{\R}}$. It is straightforward, that $P_2\colon\Ce{\Lp{2}{\Omega}}\rightarrow\mathbb{N}$ where $\mathbb{N}:=\left\{ W\in \Cont{[0,T]}:\:\xnorm{W}{\Lp{\infty}{0,T}}\leq C_4\right\}$. By linearity of equation (\ref{eqn2existence}) and the estimate (\ref{ineq2existence}) we also get that $P_3|_\mathbb{N}\colon\mathbb{N}\rightarrow\mathbb{M}$ where $\mathbb{M}:=\left\{\kappa\in\solspace_\kappa(0,T):\: \xnorm{\kappa}{\solspace_\kappa(0,T)}\leq C_3C_4 \right\}$. Set $\mathbb{M}$ is nonempty, convex and compact in $\Lp{2}{0,T}$ (by Rellich-Kondrachov theorem, see \cite[Th. 6.3]{af}).

To sum up, we have shown that $P=P_3\circ p_2\circ P_1\colon \Lp{2}{0,T}\rightarrow \mathbb{M}$ where $\mathbb{M}$ is nonempty, convex and compact in $\Lp{2}{0,T}$. Hence $P$ has a fixed point in $\mathbb{M}$ by the Schauder theorem (see Chap. 2.6 in \cite{zei1}).
\end{proof}
The existence for arbitrary $w_k$ satisfying assumption \asmstylei{asm06} and the uniqueness will be a consequence of other theorems which will be proven below.
\begin{theorem}
Let the assumptions \asmstylei{asm02} - \asmstylei{asm12} be fulfilled, let $\hat{u}\in U$ and $(y_0,\kappa_{10},\ldots,\kappa_{J0})\in X^0$. Assume also that $\xnorm{\hat{u}}{U}\leq R^U$ for some $R^U> 0$ and that $\xnorm{(y_0,\kappa_{10},\ldots,\kappa_{J0})}{X^0}\leq R^0$ for some $R^0> 0$. Let $(y,\kappa_1,\ldots,\kappa_J)\in X^2$ be a weak solution of system (\ref{eqnmain1}) - (\ref{eqnmain3}) corresponding to $g_j:=\hat{u}_{g_j}$, $h_k:=\hat{u}_{h_k}$, $\alpha_{j,k}:=\hat{u}_{\alpha_{j,k}}$ and initial condition $(y_0,\kappa_{10},\ldots,\kappa_{J0})$. Then the following estimate holds:
\begin{displaymath}
\xnorm{(y,\kappa_1,\ldots,\kappa_J)}{X^2}\ <\ C
\end{displaymath}
where 
\[C = C(T,\hnorm{\Omega},K,J,L,f_0,L_1,\ldots,L_K,w_{10},\ldots,w_{K0},R^U,R^0,\xnorm{y^*}{2,2},D,\beta_1,\ldots,\beta_J)\] 
and where the appearing quantities are the same as those in the general assumptions referred above.
\label{theorem310}\end{theorem}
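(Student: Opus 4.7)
The plan is a standard coupled energy argument: derive simultaneous integral inequalities for $\|y(t)\|_2^2$ and for each $\kappa_j(t)^2$, close them via Gronwall's lemma, and then bootstrap to the remaining components of the $X^2$ norm.

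First, I would test Definition~\ref{def04}(\ref{def04.08}) against $y$ itself, which is admissible since $y\in\eLp{2}{\Ha{1}}$ follows from membership in $X^2$. The evolution triple $\Ha{1}\subset\Lp{2}{\Omega}\subset\dual{\Ha{1}}$ justifies the chain-rule identity and yields on $(0,t)$
\begin{equation*}
\tfrac12\|y(t)\|_2^2+D\int_0^t\|\nabla y\|_2^2\,ds=\tfrac12\|y_0\|_2^2+\int_0^t(f(y),y)_{\Lp{2}{\Omega}}\,ds+\sum_{j=1}^J\int_0^t\kappa_j(g_j,y)_{\Lp{2}{\Omega}}\,ds,
\end{equation*}
where the Lipschitz bound $|f(y)|\leq L|y|+|f_0|$, Cauchy--Schwarz, and Young's inequality absorb the two sums into $C\|y\|_2^2+\tfrac12\sum_j\kappa_j^2$ plus a constant, with $C$ depending on $L,f_0,\hnorm{\Omega},R^U$. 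In parallel, multiplying the $j$-th ODE of Definition~\ref{def04}(\ref{def04.10}) by $\kappa_j$ and integrating yields an analogous identity for $\kappa_j(t)^2$, and the Lipschitz bound on $w_k$ together with Cauchy--Schwarz gives $|W_j(y(\cdot,s),y^*(\cdot,s))|\leq C_1(1+\|y(\cdot,s)\|_2)$ with $C_1$ depending only on the quantities listed in the statement (since $\|\hat u\|_U\leq R^U$ controls $|\alpha_{jk}|$ and $\|h_k\|_2$, while $y^*\in\eLp{\infty}{\Lp{2}{\Omega}}$ bounds $\|y^*(\cdot,s)\|_2$). Young then absorbs $W_j\kappa_j$ into $\tfrac12\kappa_j^2+C_1^2(1+\|y\|_2^2)$.

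Summing the PDE identity and the $J$ ODE identities produces a closed inequality $E(t)\leq C_\star+C_\star\int_0^tE(s)\,ds$ with $E(t):=\|y(t)\|_2^2+\sum_j\beta_j\kappa_j(t)^2$, so Gronwall's lemma delivers the $\eLp{\infty}{\Lp{2}{\Omega}}$ bound on $y$ and the $\Lp{\infty}{0,T}$ bounds on each $\kappa_j$; reinsertion into the PDE identity then yields the $\Lp{2}{Q_T}$ bound on $\nabla y$. For the time derivatives I would read $\|y'(t)\|_{\dual{\Ha{1}}}\leq D\|\nabla y(t)\|_2+\|f(y(t))\|_2+\sum_j|\kappa_j(t)|\|g_j\|_2$ off Definition~\ref{def04}(\ref{def04.08}), square and integrate, while for $\kappa_j'$ the ODE rewritten as $\kappa_j'=\beta_j^{-1}(W_j-\kappa_j)$ gives the $\Lp{2}{0,T}$ bound immediately from the already obtained estimates. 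Conceptually the argument is routine; the main effort is the bookkeeping required to verify that every constant emerging along the way depends only on the data prescribed in the statement and, in particular, that the product-space norm on $U$ genuinely dominates $\|g_j\|_2$, $\|h_k\|_2$ and all $|\alpha_{jk}|$ uniformly.
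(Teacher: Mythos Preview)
Your approach is correct and mirrors the paper's proof exactly: test the PDE against $y$ and each ODE against $\kappa_j$, sum, apply Gronwall to control $\|y\|_{2,\infty}$ and $\|\kappa_j\|_{L^\infty(0,T)}$, then bootstrap to $\nabla y$, $y'$, and $\kappa_j'$. One minor bookkeeping point: to match the stated dependence on $\|y^*\|_{2,2}$ rather than $\|y^*\|_{2,\infty}$, keep $\|y^*(\cdot,s)\|_2$ under the time integral in your estimate of $W_j\kappa_j$ instead of bounding it pointwise by the $L^\infty(0,T;L^2(\Omega))$ norm.
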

\begin{proof}
We test the weak form of the first equation in (\ref{eqnmain1}), see Definition \ref{def04}, by $y(x,s)\mathbf{1}_{(0,t)}(s)$ and obtain (by using the H\"older inequality, the Young inequality and our structural assumptions --- note that $\hnorm{f(s)}\leq \hnorm{f_0} + L\hnorm{s}$):
\begin{equation}\begin{array}{cl}
			&\int_0^t \left<y',y\right> + D\xnorm{\nabla y}{2}^2 \:ds = \int_0^t (f(y),y)_{\Lp{2}{\Omega}}+\sum_{j=1}^J(\kappa_j \hat{u}_{g_j},y)_{\Lp{2}{\Omega}} \:ds\\
\leq	&\int_0^t L\xnorm{y}{2}^2  + \hnorm{f_0}\int_\Omega\hnorm{y}\:dx + \sum_{j=1}^J \hnorm{\kappa_j(s)}\xnorm{\hat{u}_{g_j}}{2}\xnorm{y}{2}\:ds\\
\leq	&\int_0^t (L + \frac{\hnorm{f_0}}{2})\xnorm{y}{2}^2 + \frac{\hnorm{f_0}}{2}\int_\Omega 1\:dx + \frac{J}{2}\xnorm{y}{2} + \frac{1}{2}\sum_{j=1}^J \hnorm{\kappa_j(s)}^2\xnorm{\hat{u}_{g_j}}{2}^2\:ds\\
=			&\int_0^t (L + \frac{\hnorm{f_0}}{2} + \frac{J}{2})\xnorm{y}{2}^2 + \frac{1}{2}\sum_{j=1}^J \xnorm{\hat{u}_{g_j}}{2}^2\hnorm{\kappa_j(s)}^2\:ds + C_1
\end{array}\label{eqn310}\end{equation}
where $C_1 = \frac{1}{2}Tf_0\hnorm{\Omega}$. At the same time, testing the equation for $\kappa_j$ by $\kappa_j(s)\mathbf{1}_{(0,t)}(s)$ and neglecting the $\hnorm{\kappa_j}^2$ term yields:
\begin{equation}\begin{array}{cl}
			&\beta_j\int_0^t \kappa_j '\kappa_j\:ds \leq \int_0^t \sum_{k=1}^K \hat{u}_{\alpha_{jk}}w_k(\int_\Omega \hat{u}_{h_k}(y-y^*)\:dx)\kappa_j\:ds\\
\leq	&\int_0^t \sum_{k=1}^K \left(L_k\hnorm{\hat{u}_{\alpha_{jk}}}\xnorm{\hat{u}_{h_k}}{2}\xnorm{y-y^*}{2}\hnorm{\kappa_j} + \hnorm{w_{k0}}\hnorm{\hat{u}_{\alpha_{jk}}}\hnorm{\kappa_j(s)}\right)\:ds \\
\end{array}\label{eqn312.0}\end{equation}
The firs term appearing in the sum obeys:
\begin{equation}\begin{array}{cl}
			&L_k\hnorm{\hat{u}_{\alpha_{jk}}}\xnorm{\hat{u}_{h_k}}{2}\xnorm{y-y^*}{2}\hnorm{\kappa_j} \leq \frac{1}{2}L_k^2\hnorm{\hat{u}_{\alpha_{jk}}}^2\xnorm{\hat{u}_{h_k}}{2}^2\xnorm{y-y^*}{2}^2 + \frac{1}{2}\hnorm{\kappa_j(s)}^2\\
\leq	&\frac{1}{2}L_k^2\hnorm{\hat{u}_{\alpha_{jk}}}^2\xnorm{\hat{u}_{h_k}}{2}^2\xnorm{y}{2}^2 + \frac{1}{2}\hnorm{\kappa_j(s)}^2 +\\
+			&\frac{1}{2}L_k^2\hnorm{\hat{u}_{\alpha_{jk}}}^2\xnorm{\hat{u}_{h_k}}{2}^2\xnorm{y^*}{2}^2
\end{array}\label{eqn312.1}\end{equation}
The second term in the right hand side of (\ref{eqn312.0}) satisfies:
\begin{equation}\begin{array}{cl}
			&\hnorm{w_{k0}}\hnorm{\hat{u}_{\alpha_{jk}}}\hnorm{\kappa_j(s)} \leq \frac{1}{2}\hnorm{\kappa_j(s)}^2 + \frac{1}{2}w_{k0}^2\hat{u}_{\alpha_{jk}}^2
\end{array}\label{eqn312.2}\end{equation}
Combining (\ref{eqn312.0}) with (\ref{eqn312.1}) and (\ref{eqn312.2}) yields:
\begin{equation}\begin{array}{cl}
			&\int_0^t \kappa_j '\kappa_j\:ds = \beta_j^{-1}\int_0^t \sum_{k=1}^K \hat{u}_{\alpha_{jk}}w_k(\int_\Omega \hat{u}_{h_k}(y-y^*)\:dx)\kappa_j\:ds\\
\leq	&\int_0^t \frac{1}{2\beta_j}\sum_{k=1}^K \left(L_k^2\hnorm{\hat{u}_{\alpha_{jk}}}^2\xnorm{\hat{u}_{h_k}}{2}^2\xnorm{y}{2}^2\right) + \frac{K}{\beta_j}\hnorm{\kappa_j}^2 \:ds + C_{2,j} + C_{3,j}\\
\end{array}\label{eqn312}\end{equation}
where 
\begin{displaymath}\begin{array}{rcl}
C_{2,j} &=& \frac{T}{2\beta_j}\sum_{k=1}^K w_{k0}^2\hat{u}_{\alpha_{jk}}^2 \\
C_{3,j} &=& \frac{1}{2\beta_j} \sum_{k=1}^K \int_0^t L_k^2\hnorm{\hat{u}_{\alpha_{jk}}}^2\xnorm{\hat{u}_{h_k}}{2}^2\xnorm{y^*}{2}^2\\
		&=& \frac{1}{2\beta_j} \xnorm{y^*}{2,2}\sum_{k=1}^K L_k^2\hnorm{\hat{u}_{\alpha_{jk}}}^2\xnorm{\hat{u}_{h_k}}{2}^2
\end{array}\end{displaymath}

The spaces $\Ha{1}\subset\Lp{2}{\Omega}\subset\dual{\Ha{1}}$ form an evolution triple, 
hence  the identities $2\int_0^t\left<y',y\right> = \xnorm{y(\,.\,,t)}{2}^2 - \xnorm{y(\,.\,,0)}{2}^2$ and $2\int_0^t\kappa_j '\kappa_j\ = \hnorm{\kappa_j(t)}^2 - \hnorm{\kappa_j(0)}^2$ hold (see \cite[Prop. 23.23]{zei2a}). 
We can use these identities in the estimates (\ref{eqn310}) and (\ref{eqn312}), substitute $y(.,0)=y_0$, $\kappa_j(0)=\kappa_{j0}$, neglect the gradient term and, after summation of (\ref{eqn310}) and (\ref{eqn312}) for every $j$, obtain:
\begin{equation}\begin{array}{cl}
		&\xnorm{y(.,t)}{2}^2 + \sum_{j=1}^J \hnorm{\kappa_j(t)}^2 \leq \xnorm{y_0}{2}^2 + \left(\sum_{j=1}^J \hnorm{\kappa_{j0}}^2\right)\\
+		&2C_1 + 2\sum_{j=1}^J\left(C_{2,j} + C_{3,j}\right) + \int_0^t C_4\xnorm{y}{2}^2 + C_5\sum_{j=1}^J \hnorm{\kappa_j}^2 \:ds
\end{array}\label{eqn314}\end{equation}
where
\begin{displaymath}\begin{array}{rcl}
C_4 &=& 2L + \hnorm{f_0} + J + \sum_{j=1}^J\sum_{k=1}^K \frac{L_k}{\beta_j}^2\hnorm{\hat{u}_{\alpha_{jk}}}^2\xnorm{\hat{u}_{h_k}}{2}^2\\
C_5 &=& 2K\max_{j=1,\ldots,J}\beta_j^{-1} + \max_{j=1,\ldots,J}\xnorm{\hat{u}_{g_j}}{2}^2\\
\end{array}\end{displaymath}
Now, we can use the integral Gr\"onwall inequality to find that% for a.e. $t\in (0,T)$
\begin{equation}\begin{array}{cl}
&\xnorm{y}{2,\infty}^2 + \sum_{j=1}^J \xnorm{\kappa_j}{\Lp{\infty}{0,T}}^2 \leq\\
\leq	& \left(\xnorm{y_0}{2}^2 + \left(\sum_{j=1}^J \hnorm{\kappa_{j0}}^2\right) + 2C_1 + 2\sum_{j=1}^J(C_{2,j} + C_{3,j})\right)\\
\cdot & (1 + T\max\{C_4,C_5\}e^{\max\{C_4,C_5\}})
\end{array}\label{eqn316}\end{equation}
The structure of the constants $C_1,C_{2,j},C_{3,j},C_4,C_5$ guarantees that the right hand side of the above depends only on the quantities stated in the assertion of the theorem.

Still, to complete the proof we need to estimate the terms $\xnorm{\nabla y}{2,2}$, $\xnorm{y'}{\dual{\Ha{1}},2}$ and $\xnorm{\kappa_j'}{\Lp{2}{0,T}}$. For estimating the gradient term, we again use inequality (\ref{eqn310}) with $t=T$, neglecting the time derivative term:
\begin{equation}\begin{array}{cl}
			&D\xnorm{\nabla y}{2,2}^2 \leq \int_0^T \frac{1}{2}C_4 \xnorm{y}{2}^2 + \frac{1}{2}C_5\sum_{j=1}^J \hnorm{\kappa_j(s)}^2\:ds + C_1 \\
\leq	&C_6\frac{C_4}{2}\xnorm{y}{2,\infty}^2 + C_6\frac{C_5}{2}\sum_{j=1}^J\xnorm{\kappa_j}{\Lp{\infty}{0,T}}^2 + C_1
\end{array}\label{eqn318}\end{equation}
where $C_6=T$ is a constant appearing in estimating %the second Lebesgue norm by the supremum norm, 
the norm of $\Lp{2}{0,T}$ by the norm of $\Lp{\infty}{0,T}$. Next, we use (\ref{eqn316}) to estimate the right hand side of the above inequality in terms of constants $C_1,\ldots,C_6$.

To obtain estimates for the time derivative of $y$, we treat it, as well as the other terms appearing in the weak form of (\ref{eqnmain1}), as functionals on the space $\eLp{2}{{\Ha{1}}}$ and estimate their norm by the definition. This yields:
\begin{equation}\begin{array}{cl}
			&\xnorm{y'}{\dual{\Ha{1}},2}\leq D\xnorm{\nabla y}{2,2} + \xnorm{f(y)}{2,2} + \sum_{j=1}^J\xnorm{\kappa_j \hat{u}_{g_j}}{2,2}\\
\leq	&D\xnorm{\nabla y}{2,2} + \xnorm{\hnorm{f_0} + L\hnorm{y}}{2,2} + \sum_{j=1}^J\xnorm{\kappa_j \hat{u}_{g_j}}{2,2}\\
\leq	&D\xnorm{\nabla y}{2,2} + C_6 L\xnorm{y}{2,\infty}^2 +  C_6 C_5 \sum_{j=1}^J\xnorm{\kappa_j}{\Lp{\infty}{0,T}} + (T\hnorm{\Omega})^{1/2}\hnorm{f_0}
\end{array}\label{eqn320}\end{equation}
and again (\ref{eqn316}) and (\ref{eqn318}) can be applied.

Moreover, by the structure of (\ref{eqnmain2}) we easily derive the estimates for the time derivative of $\kappa_j$ (proceeding as in (\ref{eqn312})):
\begin{equation}\begin{array}{cl}
			&\xnorm{\kappa_j'}{\Lp{2}{0,T}}^2\leq 2\xnorm{\kappa}{\Lp{2}{0,T}}^2 + 2\xnorm{\sum_{k=1}^K \hat{u}_{\alpha_{jk}}w_k(\int_\Omega \hat{u}_{h_k}(y-y^*)\:dx)}{\Lp{2}{0,T}}^2\\
\leq	& 2\xnorm{\kappa_j}{\Lp{2}{0,T}}^2 + 2 C_4\xnorm{y}{2,2}^2 + 4C_{2,j} + 4C_{3,j}\\
\end{array}\label{eqn322}\end{equation}
and, as before, apply (\ref{eqn316}) to the right hand side.

Altogether, (\ref{eqn316}) - (\ref{eqn322}) guarantee that all the investigated quantities can be estimated in terms of the constants $C_1,C_{2,j},C_{3,j},C_4,C_5,C_6$, which depend at most on the quantities stated in the assertion of the theorem.
\end{proof}

\begin{theorem}
Let the assumptions \asmstylei{asm02} - \asmstylei{asm12} be fulfilled, let $\hat{u}^1,\hat{u}^2\in U$ and 
\begin{displaymath}
(y_0^1,\kappa_{10}^1,\ldots,\kappa_{J0}^1),\ (y_0^2,\kappa_{10}^2,\ldots,\kappa_{J0}^2)\ \in\  X^0
\end{displaymath}
Assume also that $\xnorm{\hat{u}^i}{U}\leq R^U$ for some $R^U> 0$ and that $\xnorm{(y_0^i,\kappa_{10}^i,\ldots,\kappa_{J0}^i)}{X^0}\leq R^0$ for some $R^0> 0$, for $i=1,2$. Let $(y^i,\kappa_1^i,\ldots,\kappa_J^i)\in X^2$ be a weak solution of system (\ref{eqnmain1}) - (\ref{eqnmain3}) corresponding to $g_j:=\hat{u}_{g_j}^i$, $h_k:=\hat{u}_{h_k}^i$, $\alpha_{j,k}:=\hat{u}_{\alpha_{j,k}}^i$ and initial condition $(y_0^i,\kappa_{10}^i,\ldots,\kappa_{J0}^i)$, for $i=1,2$. Denote $y=y^1 - y^2$, $\kappa_j = \kappa_j^1 - \kappa_j^2$, $\hat{u} = \hat{u}^1 - \hat{u}^2$, $y_0 = y_0^1 - y_0^2$ and $\kappa_{j0} = \kappa_{j0}^1 - \kappa_{j0}^2$. Then:
\begin{displaymath}
\xnorm{(y,\kappa_1,\ldots,\kappa_J)}{X^2}\ <\ C\left(\xnorm{\hat{u}}{U}^2 + \xnorm{(y_0,\kappa_{10}\ldots,\kappa_{J0})}{X^0}^2\right)^{1/2}
\end{displaymath}
where 
\[C = C(T,\hnorm{\Omega},K,J,L,f_0,L_1,\ldots,L_K,w_{10},\ldots,w_{K0},R^U,R^0,\xnorm{y^*}{2,2},D,\beta_1,\ldots,\beta_J)\] 
and where the appearing quantities are the same as those in the general assumptions referred above.
\label{theorem313}\end{theorem}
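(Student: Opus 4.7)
The plan is to parallel the energy estimate used in the proof of Theorem \ref{theorem310}. With $y=y^1-y^2$, $\kappa_j=\kappa_j^1-\kappa_j^2$, $\hat{u}=\hat{u}^1-\hat{u}^2$, $y_0=y_0^1-y_0^2$ and $\kappa_{j0}=\kappa_{j0}^1-\kappa_{j0}^2$, I first subtract the two weak formulations (Definition \ref{def04}) and rewrite the resulting sources via the algebraic identities
\[\hat{u}_{g_j}^1\kappa_j^1 - \hat{u}_{g_j}^2\kappa_j^2 = \hat{u}_{g_j}^1\kappa_j + \hat{u}_{g_j}\,\kappa_j^2,\]
so that the right-hand side of the PDE for $y$ splits into a part linear in the unknown $\kappa_j$ and a forcing part proportional to the data difference $\hat{u}_{g_j}$ multiplied by the \textit{a~priori} bounded quantity $\kappa_j^2$. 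An analogous decomposition of $W_j^1-W_j^2$, obtained by adding and subtracting the intermediate term $\sum_k \hat{u}_{\alpha_{jk}}^2 w_k(\int_\Omega \hat{u}_{h_k}^1(y^1-y^*)\,dx)$, produces contributions of two types: one proportional to $\hat{u}_{\alpha_{jk}}$ with an \textit{a~priori} bounded $w_k$-argument, and one proportional to $\hat{u}_{\alpha_{jk}}^2$ of the form $w_k(\int_\Omega \hat{u}_{h_k}^1(y^1-y^*)\,dx)-w_k(\int_\Omega \hat{u}_{h_k}^2(y^2-y^*)\,dx)$, the latter controlled via the Lipschitz continuity of $w_k$ by $L_k(\xnorm{\hat{u}_{h_k}}{2}\xnorm{y^1-y^*}{2}+\xnorm{\hat{u}_{h_k}^2}{2}\xnorm{y}{2})$.

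Then I test the difference PDE by $y\mathbf{1}_{(0,t)}$ and each difference ODE by $\kappa_j\mathbf{1}_{(0,t)}$, sum over $j$ and argue as in (\ref{eqn310})--(\ref{eqn314}). All cross terms involving $\hat{u}_{g_j},\hat{u}_{h_k},\hat{u}_{\alpha_{jk}}$ are split by Young's inequality; every ``bad'' companion factor (such as $|\kappa_j^2(s)|^2$, $\xnorm{y^1-y^*}{2}^2$, or the size of the $w_k$-arguments) is bounded uniformly in $s$ through Theorem \ref{theorem310} with the common bounds $R^U,R^0$, so that after integration in time these contributions collapse into constant multiples of the components of $\xnorm{\hat{u}}{U}^2$. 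The remaining terms are either quadratic in the unknowns $\xnorm{y}{2}^2$ and $|\kappa_j|^2$ (to be absorbed by Gr\"onwall) or come from the initial data and yield the $\xnorm{(y_0,\kappa_{10},\ldots,\kappa_{J0})}{X^0}^2$ contribution via $y(0)=y_0$ and $\kappa_j(0)=\kappa_{j0}$.

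This leads to an inequality of the form
\[\xnorm{y(\cdot,t)}{2}^2 + \sum_{j=1}^J|\kappa_j(t)|^2 \leq C\bigl(\xnorm{\hat{u}}{U}^2+\xnorm{(y_0,\kappa_{10},\ldots,\kappa_{J0})}{X^0}^2\bigr) + \int_0^t C'\!\left(\xnorm{y}{2}^2+\sum_{j=1}^J|\kappa_j|^2\right)ds,\]
with constants depending only on the structural data listed in the statement. The integral Gr\"onwall inequality then provides the required bound on $\xnorm{y}{2,\infty}^2+\sum_j\xnorm{\kappa_j}{\Lp{\infty}{0,T}}^2$. The gradient contribution $\xnorm{\nabla y}{2,2}$ is recovered by taking $t=T$ in the tested PDE and discarding the time-derivative part (as in (\ref{eqn318})); the dual norm $\xnorm{y'}{\dual{\Ha{1}},2}$ is read off the weak form by duality (as in (\ref{eqn320})); and $\xnorm{\kappa_j'}{\Lp{2}{0,T}}$ follows directly from the difference ODE (as in (\ref{eqn322})). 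The main obstacle is the careful bookkeeping of the cross terms: one must ensure that every factor multiplying $\hat{u}$ or multiplying $y,\kappa_j$ is majorised uniformly using Theorem \ref{theorem310} \emph{before} Gr\"onwall is applied, so that the final constant depends only on the data listed in the statement.
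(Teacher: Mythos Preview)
Your proposal is correct and follows essentially the same approach as the paper: subtract the two weak formulations, decompose the mixed products $\hat{u}_{g_j}^i\kappa_j^i$ and $W_j^1-W_j^2$ by inserting intermediate terms, test by $y\mathbf{1}_{(0,t)}$ and $\kappa_j\mathbf{1}_{(0,t)}$, control all ``bad'' factors through Theorem~\ref{theorem310}, apply Gr\"onwall, and finally recover $\xnorm{\nabla y}{2,2}$, $\xnorm{y'}{\dual{\Ha{1}},2}$ and $\xnorm{\kappa_j'}{\Lp{2}{0,T}}$ exactly as in (\ref{eqn318})--(\ref{eqn322}). The only cosmetic differences are which index you keep fixed in the telescoping decompositions (you write $\hat{u}_{g_j}^1\kappa_j+\hat{u}_{g_j}\kappa_j^2$ where the paper writes $\hat{u}_{g_j}\kappa_j^1+\hat{u}_{g_j}^2\kappa_j$, and you split $W_j^1-W_j^2$ in two steps where the paper writes out three terms at once), but the resulting estimates and constants are of the same type.
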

\begin{proof}
Executing the subtraction by sides of the corresponding systems of equations infers that $(y,\kappa_{1},\ldots,\kappa_{J})$ satisfies (in the weak sense, analogous to that given in the Definition \ref{def04}) the following system of equations:
\begin{equation}
\left\{\begin{array}{ll}
y_t - D\Delta y = f(y^1) - f(y^2)+ \sum_{j=1}^J \left(\hat{u}_{g_j}^1\kappa_j^1 - \hat{u}_{g_j}^2\kappa_j^2 \right) & \textrm{on $Q_T$}\\
\frac{\partial y}{\partial n} = 0 & \textrm{on $\partial\Omega\times (0,T)$}\\
y(0,x)=y_0(x) & \textrm{for $x\in\Omega$}
\end{array}\right.
\label{eqn330}\end{equation}
together with
\begin{equation}
\left\{\begin{array}{ll}
\beta_1\kappa_1' + \kappa_1 = W_1(y^1,y^*) - W_1(y^2,y^*)& \textrm{on $[0,T]$}\\
\vdots &\vdots\\
\beta_J\kappa_J' + \kappa_J = W_J(y^1,y^*) - W_J(y^2,y^*)& \textrm{on $[0,T]$}\\
\kappa_j(0)=\kappa_{j0}\in\R & \textrm{for $j=1,\ldots,J$} 
\end{array}\right.
\label{eqn332}\end{equation}
where $W_j$ are given by (\ref{eqnmain3}).

Now, we proceed as in the proof of Theorem \ref{theorem310}. The present proof is very similar however requires longer calculations, which involves multiple use of the triangle inequality. 

Testing the main equation of (\ref{eqn330}) by $y(x,s)\mathbf{1}_{0,t}(s)$ yields:
\begin{equation}\begin{array}{cl}
			&\int_0^t \left<y',y\right> + D\xnorm{\nabla y}{2}^2 \:ds = \int_0^t (f(y^1)-f(y^2),y^1-y^2)_{\Lp{2}{\Omega}} + \\
			&\qquad\qquad + \sum_{j=1}^J(\hat{u}_{g_j}^1\kappa_j^1 - \hat{u}_{g_j}^2\kappa_j^2,y^1-y^2)_{\Lp{2}{\Omega}} \:ds\\
\end{array}\label{eqn334}\end{equation}
By the Lipschitz continuity of $f$ we have:
\begin{equation}\begin{array}{cl}
			& (f(y^1)-f(y^2),y^1-y^2)_{\Lp{2}{\Omega}} \leq L\xnorm{y^1-y^2}{2}
\end{array}\label{eqn336}\end{equation}
while for the second term on the right hand side of (\ref{eqn334}) we can write
\begin{equation}\begin{array}{cl}
			& (\hat{u}_{g_j}^1\kappa_j^1 - \hat{u}_{g_j}^2\kappa_j^2,y^1-y^2)_{\Lp{2}{\Omega}} = (\hat{u}_{g_j}^1\kappa_j^1 - \hat{u}_{g_j}^2\kappa_j^1,y)_{\Lp{2}{\Omega}} + (\hat{u}_{g_j}^2\kappa_j^1 - \hat{u}_{g_j}^2\kappa_j^2,y)_{\Lp{2}{\Omega}}\\
\leq	&\hnorm{\kappa_j^1}\xnorm{\hat{u}_{g_j}}{2}\xnorm{y}{2} + \hnorm{\kappa_j}\xnorm{\hat{u}_{g_j}^2}{2}\xnorm{y}{2}\\
\leq	&\frac{1}{2}\hnorm{\kappa_j^1}^2\xnorm{\hat{u}_{g_j}}{2}^2 + \frac{1}{2}\xnorm{y}{2}^2 + \frac{1}{2}\xnorm{\hat{u}_{g_j}^2}{2}^2\hnorm{\kappa_j}^2 + \frac{1}{2}\xnorm{y}{2}^2\\
\leq	&\frac{1}{2} C_1 \xnorm{\hat{u}_{g_j}}{2}^2 + \frac{1}{2} {(R^U)}^2 \hnorm{\kappa_j}^2 + \xnorm{y}{2}^2
\end{array}\label{eqn338}\end{equation}
where $C_1$ denotes the constant from the assertion of Theorem \ref{theorem310} --- it states that the square of the supremum of each $\kappa_j$ is bounded by this constant. Now, (\ref{eqn334}), (\ref{eqn336}) and (\ref{eqn338}) together imply
\begin{equation}\begin{array}{cl}
			& \int_0^t \left<y',y\right> + D\xnorm{\nabla y}{2}^2 \:ds \leq \int_0^t (L+J)\xnorm{y}{2}^2 + \sum_{j=1}^J\frac{(R^U)^2}{2}\hnorm{\kappa_j}^2 + \sum_{j=1}^J\frac{C_1}{2}\xnorm{\hat{u}_{g_j}}{2}^2 \:ds
\end{array}\label{eqn340}\end{equation}

A similar procedure can be performed for the equation for $\kappa_j$ --- we test the equations of (\ref{eqn332}) by $\kappa_j(s)\mathbf{1}_{0,t}(s)$ and neglect the $\hnorm{\kappa_j}^2$ terms:
\begin{equation}\begin{array}{cl}
			& \beta_j\int_0^t \kappa_j '\kappa_j\:ds \\
\leq 	&\int_0^t \sum_{k=1}^K \hnorm{\hat{u}_{\alpha_{jk}}^1w_k(\int_\Omega \hat{u}_{h_k}^1(y^1-y^*)\:dx) - \hat{u}_{\alpha_{jk}}^2w_k(\int_\Omega \hat{u}_{h_k}^2(y^2-y^*)\:dx)}\hnorm{\kappa_j}\:ds \\
\leq	&\int_0^t \frac{K}{2}\hnorm{\kappa_j}^2 + \sum_{k=1}^K \frac{1}{2}\hnorm{\hat{u}_{\alpha_{jk}}^1 w_k(\int_\Omega \hat{u}_{h_k}^1(y^1-y^*)\:dx) - \hat{u}_{\alpha_{jk}}^2 w_k(\int_\Omega \hat{u}_{h_k}^2(y^2-y^*)\:dx)}^2\:ds\\
\leq	&\int_0^t \frac{K}{2}\hnorm{\kappa_j}^2 + \sum_{k=1}^K \frac{1}{2}\hnorm{\hat{u}_{\alpha_{jk}}^1 w_k(\int_\Omega \hat{u}_{h_k}^1(y^1-y^*)\:dx) - \hat{u}_{\alpha_{jk}}^1 w_k(\int_\Omega \hat{u}_{h_k}^1(y^2-y^*)\:dx)}^2\\
+			&\sum_{k=1}^K \frac{1}{2}\hnorm{\hat{u}_{\alpha_{jk}}^1 w_k(\int_\Omega \hat{u}_{h_k}^1(y^2-y^*)\:dx) - \hat{u}_{\alpha_{jk}}^1 w_k(\int_\Omega \hat{u}_{h_k}^2(y^2-y^*)\:dx)}^2 \\
+			&\sum_{k=1}^K \frac{1}{2}\hnorm{\hat{u}_{\alpha_{jk}}^1 w_k(\int_\Omega \hat{u}_{h_k}^2(y^2-y^*)\:dx) - \hat{u}_{\alpha_{jk}}^2 w_k(\int_\Omega \hat{u}_{h_k}^2(y^2-y^*)\:dx)}^2\:ds\\
\end{array}\label{eqn342}\end{equation}
We estimate separately the terms appearing under the sums above. By the Lipschitz continuity of $w_k$ we get:
\begin{equation}\begin{array}{cl}
			& \hnorm{\hat{u}_{\alpha_{jk}}^1 w_k(\int_\Omega \hat{u}_{h_k}^1(y^1-y^*)\:dx) - \hat{u}_{\alpha_{jk}}^1 w_k(\int_\Omega \hat{u}_{h_k}^1(y^2-y^*)\:dx)}\\
\leq	& L_k\hnorm{\hat{u}_{\alpha_{jk}}^1}\xnorm{\hat{u}_{h_k}^1}{2}\xnorm{y^1 - y^2}{2} \leq L_k R^U R^U\xnorm{y^1 - y^2}{2}
\end{array}\label{eqn344}\end{equation}
The second term is estimated as follows:
\begin{equation}\begin{array}{cl}
			& \hnorm{\hat{u}_{\alpha_{jk}}^1 w_k(\int_\Omega \hat{u}_{h_k}^1(y^2-y^*)\:dx) - \hat{u}_{\alpha_{jk}}^1 w_k(\int_\Omega \hat{u}_{h_k}^2(y^2-y^*)\:dx)}\\
\leq	&  L_k\hnorm{\hat{u}_{\alpha_{jk}}^1}\xnorm{y^2 - y^*}{2}\xnorm{\hat{u}_{h_k}^1 - \hat{u}_{h_k}^1}{2} \leq  L_k R^U (C_1^{1/2} + \xnorm{y^*}{2})\xnorm{\hat{u}_{h_k}^1 - \hat{u}_{h_k}^1}{2}
\end{array}\label{eqn346}\end{equation}
 The last term in the right hand side of (\ref{eqn342}) obeys:
\begin{equation}\begin{array}{cl}
			& \hnorm{\hat{u}_{\alpha_{jk}}^1 w_k(\int_\Omega \hat{u}_{h_k}^2(y^2-y^*)\:dx) - \hat{u}_{\alpha_{jk}}^2 w_k(\int_\Omega \hat{u}_{h_k}^2(y^2-y^*)\:dx)}\\
\leq	& \hnorm{\hat{u}_{\alpha_{jk}}^1 - \hat{u}_{\alpha_{jk}}^2}\hnorm{w_k(\int_\Omega \hat{u}_{h_k}^2(y^2-y^*)\:dx)}\\
\leq	& \hnorm{\hat{u}_{\alpha_{jk}}^1 - \hat{u}_{\alpha_{jk}}^2}\left( w_{k0} + L_k \xnorm{\hat{u}_{h_k}^2}{2}\xnorm{y^2 - y^*}{2} \right)\\
\leq	& \hnorm{\hat{u}_{\alpha_{jk}}^1 - \hat{u}_{\alpha_{jk}}^2}\left( w_{k0} + L_k R^U\left(C_1^{1/2} + \xnorm{y^*}{2}\right) \right)\\
\end{array}\label{eqn348}\end{equation}
From (\ref{eqn342}), (\ref{eqn344}), (\ref{eqn346}) and (\ref{eqn348}) we infer that:
\begin{equation}\begin{array}{cl}
		& \int_0^t \kappa_j '\kappa_j\:ds\ \leq\ \frac{K}{2\beta_j}\int_0^t\hnorm{\kappa_j}^2 \:ds\ +\  \left( \frac{(R^U)^4}{2\beta_j}\sum_{k=1}^K L_k^2 \right)\int_0^t\xnorm{y}{2}^2 \:ds\ + \\
+		& \beta_j^{-1}\sum_{k=1}^K \xnorm{\hat{u}_{h_k}}{2}^2 \int_0^T L_k^2 (R^U)^2 \left(C_1 + \xnorm{y^*}{2}^2\right) \:ds\  + \\
+		& \beta_j^{-1}\sum_{k=1}^K \hnorm{\hat{u}_{\alpha_{jk}}}^2 \int_0^T \left( w_{k0}^2 + L_k^2(R^U)^2(C_1 + \xnorm{y^*}{2}^2) \right)\:ds \\
\leq	& \frac{K}{2\beta_j}\int_0^t\hnorm{\kappa_j}^2 \:ds\ +\ C_{2,j} \int_0^t\xnorm{y}{2}^2 \:ds\ +\ C_{3,j} \sum_{k=1}^K \xnorm{\hat{u}_{h_k}}{2}^2\ +\ C_{4,j} \sum_{k=1}^K \hnorm{\hat{u}_{\alpha_{jk}}}^2 
\end{array}\label{eqn350}\end{equation}
where
\begin{displaymath}\begin{array}{rcl}
C_{2,j} & = & C_{2,j}(R^U,L_1,\ldots,L_K,\beta_j) \\
C_{3,j} & = & C_{3,j}(R^U,L_1,\ldots,L_K,\beta_j,\xnorm{y^*}{2,2},C_1) \\
C_{4,j} & = & C_{4,j}(R^U,L_1,\ldots,L_K,\beta_j,w_{10},\ldots,w_{K0},\xnorm{y^*}{2,2},C_1) \\
\end{array}\end{displaymath}

We sum (\ref{eqn340}) and (\ref{eqn350}) for every $j$, use the identities $2\int_0^t\left<y',y\right> = \xnorm{y(\,.\,,t)}{2}^2 - \xnorm{y(\,.\,,0)}{2}^2$ and $2\int_0^t\kappa_j '\kappa_j\ = \hnorm{\kappa_j(t)}^2 - \hnorm{\kappa_j(0)}^2$ and neglect the gradient term (compare with (\ref{eqn314}) in the proof of Theorem \ref{theorem310}). As the result, we get: 
\begin{equation}\begin{array}{cl}
			&  \xnorm{y(\,.\,,t)}{2}^2 + \sum_{j=1}^J\hnorm{\kappa_j(t)}^2 \ \leq\ \xnorm{y_0}{2}^2 + \sum_{j=1}^J\hnorm{\kappa_{j0}}^2\ +\\
+			& 2(L+J+\sum_{j=1}^JC_{2,j})\int_0^t\xnorm{y}{2}^2 \:ds\ +\ ((R^U)^2+\max\{K\beta_j^{-1}\})\sum_{j=1}^J\int_0^t\hnorm{\kappa_j}^2 \:ds\ +\\
+			& C_1 \sum_{j=1}^J\xnorm{\hat{u}_{g_j}}{2}^2\ +\ 2\sum_{j=1}^J C_{3,j}\sum_{k=1}^K\xnorm{\hat{u}_{h_k}}{2}^2\ +\ 2\sum_{j=1}^J \sum_{k=1}^K C_{4,j} \hnorm{\hat{u}_{\alpha_{jk}}}^2\\
\leq	& C_5 \int_0^t\xnorm{y}{2}^2 + \sum_{j=1}^J \hnorm{\kappa_j}^2 \:ds + C_6 \xnorm{\hat{u}}{U}^2
\end{array}\label{eqn352}\end{equation}
where
\begin{displaymath}\begin{array}{rcl}
C_5 & = & \max\left\{ 2(L+J+JC_2), (R^U)^2+ \max_{j=1,\ldots,J}\frac{K}{\beta_j} \right\} \\
C_6 & = & \max\left\{ C_1, 2\sum_{j=1}^J C_{3,j}, 2C_{4,j}\right\} \\
\end{array}\end{displaymath}
By the integral Gr\"onwall inequality we infer from (\ref{eqn352}) that %for a.e. $t\in (0,T)$:
\begin{equation}\begin{array}{cl}
			&\xnorm{y}{2,\infty}^2 + \sum_{j=1}^J \xnorm{\kappa_j}{\Lp{\infty}{0,T}}^2\ \leq\\ 
\leq	&(1 + T C_5e^{C_5})\left(\xnorm{y_0}{2}^2 + \left(\sum_{j=1}^J \hnorm{\kappa_{j0}}^2\right) + C_6 \xnorm{\hat{u}}{U}^2\right)
\end{array}\label{eqn354}\end{equation}
To close the proof, it suffices to show that 
\begin{equation}\begin{array}{cl}
			&\xnorm{\nabla y}{2,2}\ +\ \xnorm{y'}{\dual{\Ha{1}},2}\ +\ \xnorm{\kappa_j'}{\Lp{2}{0,T}}\\
\leq	& C\left( \xnorm{y}{2,\infty}^2\ +\ \sum_{j=1}^J\xnorm{\kappa_j}{\Lp{\infty}{0,T}}^2 \right) \\
\end{array}\label{eqn356}\end{equation}
where $C$ depends only on $D$ and on the same quantities as $C_5$, $C_6$ and then (\ref{eqn354}) can be applied to complete our reasoning. But the estimates for particular norms appearing in (\ref{eqn356}) can be obtained with the same methods as in the proof of Theorem \ref{theorem310} (see (\ref{eqn318}), (\ref{eqn320}) and (\ref{eqn322})).
\end{proof}
In consequence of Theorems \ref{theorem04.0} and \ref{theorem310}, we obtain the following:
\begin{coro}
Assume that general assumptions \asmstylei{asm02} - \asmstylei{asm12} hold and $\left(g_j,h_k,\alpha_{jk} \right)_{j=1,\ldots,J}^{k=1,\ldots,K}\in U$. Then system (\ref{eqnmain1}) - (\ref{eqnmain3}) has a weak solution.
\label{theorem04.1}\end{coro}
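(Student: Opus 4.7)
The plan is to remove the boundedness assumption on the switching functions $w_k$ by an a priori truncation argument that exploits the estimate from Theorem \ref{theorem310}. The central observation is that, according to Theorem \ref{theorem310}, the size of any weak solution in $X^2$ depends on the switching functions only through their Lipschitz constants $L_k$ and their values $w_{k0}$ at zero. Hence, if one replaces $w_k$ with a modification that agrees with $w_k$ on a sufficiently large interval but is globally bounded, then any solution to the modified system will automatically solve the original system.

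Concretely, I would first set $R^U := \xnorm{(g_j,h_k,\alpha_{jk})}{U}$ and $R^0 := \xnorm{(y_0,\kappa_{10},\ldots,\kappa_{J0})}{X^0}$, and let $M$ be the constant provided by Theorem \ref{theorem310}, so that any weak solution must satisfy $\xnorm{y}{2,\infty} \leq M$. By the H\"older inequality, this forces
\[\left|\int_\Omega h_k(x)(y(x,t)-y^*(x,t))\,dx\right| \ \leq\ \xnorm{h_k}{2}\bigl(M + \xnorm{y^*}{2,\infty}\bigr) \ =:\ R_k\]
for almost every $t \in [0,T]$. I would then define truncated switching functions by
\[\tilde{w}_k(s) := w_k\bigl(\max(-R_k,\min(R_k,s))\bigr),\]
which are Lipschitz continuous with the same constant $L_k$, satisfy $\tilde{w}_k(0) = w_{k0}$, are bounded by $|w_{k0}| + L_k R_k$, and coincide with $w_k$ on $[-R_k,R_k]$.

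Having verified that the $\tilde w_k$'s fulfil the hypotheses of both Theorem \ref{theorem04.0} and (as far as the relevant constants are concerned) Theorem \ref{theorem310}, I would apply Theorem \ref{theorem04.0} to the system obtained by replacing $w_k$ with $\tilde{w}_k$, producing a weak solution $(\tilde{y},\tilde{\kappa}_1,\ldots,\tilde{\kappa}_J) \in X^2$. Applying Theorem \ref{theorem310} to this solution (with $L_k$ and $w_{k0}$ unchanged) gives $\xnorm{\tilde{y}}{2,\infty} \leq M$, and hence $|\int_\Omega h_k(\tilde{y}-y^*)dx| \leq R_k$ a.e. On this range $\tilde w_k$ is indistinguishable from $w_k$, so $(\tilde{y},\tilde{\kappa}_1,\ldots,\tilde{\kappa}_J)$ is in fact a weak solution to the original system.

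The only point requiring real care is to check that the constant $M$ from Theorem \ref{theorem310} depends on the switching functions exclusively through $L_k$ and $w_{k0}$ (and not through any implicit global bound on $w_k$); an inspection of the statement of Theorem \ref{theorem310} confirms this. A secondary concern is that the truncation must simultaneously preserve the Lipschitz constant and the value at zero, which the composition with the $1$-Lipschitz projection onto $[-R_k,R_k]$ handles automatically. Notably, no compactness argument or subsequence extraction is required --- this is a strict corollary of the two previous theorems.
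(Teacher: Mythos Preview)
Your argument is correct and follows essentially the same truncation strategy as the paper: exploit that the constant in Theorem \ref{theorem310} depends on the switching functions only through $L_k$ and $w_{k0}$, apply Theorem \ref{theorem04.0} to a bounded modification, and observe that the resulting solution never sees the modified part. The only cosmetic difference is that you truncate the \emph{argument} of $w_k$ (composing with the projection onto $[-R_k,R_k]$), whereas the paper truncates the \emph{values} of $w_k$ at level $n$; your choice makes the preservation of $L_k$ and $w_{k0}$ slightly more transparent, but the two approaches are interchangeable.
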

\begin{proof}
Theorem \ref{theorem04.1} assumes that $w_k$ functions are bounded, i.e. $\xnorm{w_k}{\Lp{\infty}{\R}}<\infty$. But Theorem \ref{theorem310} gives a bound for solutions of (\ref{eqnmain1}) - (\ref{eqnmain3}) that is independent on $\xnorm{w_k}{\Lp{\infty}{\R}}$. Thus the standard truncation technique can be utilized to dispense the assumption that $w_k$ are bounded. 

Namely, by Theorem \ref{theorem310} $\xnorm{y}{2,\infty}=C_y<\infty$ where $C_y$ doesn't depend on $\xnorm{w_k}{\Lp{\infty}{\R}}$. For given $w_k$ as in assumption \asmstylei{asm06}, consider its truncation $w_k^n$ given by $w_k^n(s):=w_k(s)$ for $s$ such that $\hnorm{w_k(s)}\leq n$, $w_k^n(s) = n$ for $s$ s. th. $w_k(s)> n$ and $w_k^n(s) = -n$ for $s$ s. th. $w_k(s)< -n$. Observe that $n=\xnorm{w_k^n}{\Lp{\infty}{0,T}}$. Since $C_y$ is independent on $\xnorm{w_k^n}{\Lp{\infty}{0,T}}$, there exists $n$ such that $n>\xnorm{h_k}{2}\left(C_y+\xnorm{y^*}{2,\infty}\right)$. Taking $n$ satisfying this condition, the switching function $w_k$ in (\ref{eqnmain3}) can be replaced by $w_k^n$ with no side effect to the weak solution of (\ref{eqnmain1}) - (\ref{eqnmain3}) because $\int_\Omega h_k(y(\,.\,,t)-y^*(\,.\,,t))\:dx\leq \xnorm{h_k}{2}\left(C_y+\xnorm{y^*}{2,\infty}\right)$ for a.e. $t\in [0,T]$. For such $w_k^n$, Theorem \ref{theorem04.0} can be applied.
\end{proof}
The below corollary is straightforward due to Corollary \ref{theorem04.1} and Theorem \ref{theorem313}.
\begin{coro}
Let the general assumptions \asmstylei{asm02} - \asmstylei{asm12} be fulfilled and $\left(g_j,h_k,\alpha_{jk} \right)_{j=1,\ldots,J}^{k=1,\ldots,K}\in U$. Then system (\ref{eqnmain1}) - (\ref{eqnmain3}) has a unique weak solution.
\label{theorem04.2}\end{coro}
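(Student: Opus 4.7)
The plan is to combine the two main ingredients already established in the section: Corollary \ref{theorem04.1} for existence and Theorem \ref{theorem313} for the stability estimate. Existence is immediate since Corollary \ref{theorem04.1} already asserts, under exactly the general assumptions \asmstylei{asm02}--\asmstylei{asm12} and for an arbitrary control $(g_j,h_k,\alpha_{jk})\in U$, that system (\ref{eqnmain1})--(\ref{eqnmain3}) admits a weak solution in $X^2$. So the only new content to be produced is the uniqueness part.

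For uniqueness, I would suppose that $(y^1,\kappa_1^1,\ldots,\kappa_J^1)$ and $(y^2,\kappa_1^2,\ldots,\kappa_J^2)$ are two weak solutions in $X^2$ of (\ref{eqnmain1})--(\ref{eqnmain3}) corresponding to the \emph{same} control $\hat{u}\in U$ and the \emph{same} initial data $(y_0,\kappa_{10},\ldots,\kappa_{J0})\in X^0$. Choosing $\hat{u}^1=\hat{u}^2=\hat{u}$ and identical initial conditions in Theorem \ref{theorem313}, one may fix any $R^U\geq\xnorm{\hat{u}}{U}$ and any $R^0\geq \xnorm{(y_0,\kappa_{10},\ldots,\kappa_{J0})}{X^0}$ to meet its hypotheses. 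The differences in the control and in the initial conditions vanish, so the right-hand side of the stability inequality of Theorem \ref{theorem313} is zero, and hence
\[
\xnorm{(y^1-y^2,\kappa_1^1-\kappa_1^2,\ldots,\kappa_J^1-\kappa_J^2)}{X^2}\ =\ 0.
\]
By the definition of $X^2$ this forces $y^1=y^2$ a.e. in $Q_T$ and $\kappa_j^1=\kappa_j^2$ a.e. in $(0,T)$ for every $j$, which is the required uniqueness.

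There is essentially no obstacle: the heavy lifting was done in Theorem \ref{theorem313}, whose stability estimate was deliberately derived with a constant $C$ depending only on the structural data (not on the solutions themselves), so it applies verbatim here. The only point one should check is that the two candidate solutions both satisfy the bounds $\xnorm{\hat{u}}{U}\leq R^U$ and $\xnorm{(y_0,\kappa_{10},\ldots,\kappa_{J0})}{X^0}\leq R^0$ needed for Theorem \ref{theorem313}, but this is trivially arranged by choosing $R^U$ and $R^0$ equal to (or larger than) these fixed norms. Accordingly, the proof collapses to a one-line invocation of Theorem \ref{theorem313} after citing Corollary \ref{theorem04.1} for existence.
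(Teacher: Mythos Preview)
Your proof is correct and follows exactly the route taken in the paper: existence is quoted from Corollary \ref{theorem04.1}, and uniqueness is obtained by applying the stability estimate of Theorem \ref{theorem313} with $\hat{u}^1=\hat{u}^2$ and identical initial data, so that the right-hand side vanishes. There is nothing to add.
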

This closes the part concerning the uniqueness and existence of the weak solutions of (\ref{eqnmain1}) - (\ref{eqnmain3}). 
However, Theorem \ref{theorem310} and Theorem \ref{theorem313} are necessary not only for the uniqueness and existence theorem in Corollary \ref{theorem04.2}. The stability result in Theorem \ref{theorem313} will be utilized in the part of our research that concerns optimal location of the devices (see introduction).

But there are also other properties concerning the behavior of system (\ref{eqnmain1}) - (\ref{eqnmain3}) under the perturbations of the control which we would like to present. 
Assume that there is a sequence of controls $\hat{u}^n\in U$ given and one have only the knowledge on the weak convergence of this controls. This doesn't allow to utilize the former theorems of the present section to infer about anything more than boundedness of $(y^n,\kappa_1^n,\ldots,\kappa_J^n)$ in $X^2$, where $(y^n,\kappa_1^n,\ldots,\kappa_J^n)$ denotes the solution of (\ref{eqnmain1}) - (\ref{eqnmain3}) corresponding to $\hat{u}^n$.
Hence the following theorem:
\begin{theorem}
Let the assumptions \asmstylei{asm02} - \asmstylei{asm12} be fulfilled. Let the sequence $\hat{u}^n$ converge weakly to $\hat{u}$ in $U$. Denote by $(y^n,\kappa_1^n,\ldots,\kappa_J^n)$ the solution of (\ref{eqnmain1}) - (\ref{eqnmain3}) corresponding to $\hat{u}^n$ and by $(\widetilde{y},\widetilde{\kappa}_1,\ldots,\widetilde{\kappa}_J)$ the solution of (\ref{eqnmain1}) - (\ref{eqnmain3}) corresponding to $\hat{u}$. Then there exists a sequence of natural indexes $n_1<n_2<\ldots$ such that subsequence  $(y^{n_k},\kappa_1^{n_k},\ldots,\kappa_J^{n_k})$ converges weakly-$*$ to $(\widetilde{y},\widetilde{\kappa}_1,\ldots,\widetilde{\kappa}_J)$ in $X^2$ when $k\rightarrow\infty$. 
\label{theorem314}\end{theorem}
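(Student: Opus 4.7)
The plan is to combine the uniform $X^2$-bound of Theorem \ref{theorem310} with the compactness intrinsic to $X^2$ to extract a convergent subsequence, pass to the limit in the weak formulation of Definition \ref{def04}, and identify the limit via the uniqueness in Corollary \ref{theorem04.2}. Since $\hat u^n \rightharpoonup \hat u$ in $U$, this sequence is bounded, $\xnorm{\hat u^n}{U} \le R^U$, so Theorem \ref{theorem310} furnishes a uniform $X^2$-bound for $(y^n,\kappa_1^n,\ldots,\kappa_J^n)$. Reflexivity/separability of the coordinate spaces then supplies a subsequence $n_k$ along which $y^{n_k} \overset{*}{\rightharpoonup} \bar y$ in $\eLp{\infty}{\Lp{2}{\Omega}}$, $\nabla y^{n_k} \rightharpoonup \nabla \bar y$ in $(\Lp{2}{Q_T})^d$, $(y^{n_k})' \rightharpoonup \bar y'$ in $\eLp{2}{\dual{\Ha{1}}}$, and $\kappa_j^{n_k}\overset{*}{\rightharpoonup}\bar\kappa_j$, $(\kappa_j^{n_k})'\rightharpoonup\bar\kappa_j'$ in $\Lp{\infty}{0,T}$, $\Lp{2}{0,T}$, the limit automatically lying in $X^2$.

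To pass through the nonlinear and bilinear terms I would strengthen these convergences by compactness. Aubin--Lions with $\Ha{1}\hookrightarrow\hookrightarrow\Lp{2}{\Omega}\hookrightarrow\dual{\Ha{1}}$ yields, after a further extraction, strong convergence $y^{n_k}\to\bar y$ in $\Lp{2}{Q_T}$, and by a standard diagonal argument $y^{n_k}(\,.\,,t)\to\bar y(\,.\,,t)$ in $\Lp{2}{\Omega}$ for a.e. $t\in[0,T]$. The $\kappa_j^{n_k}$ are bounded in $\Wnp{1,2}{0,T}$, which embeds compactly into $\Cont{[0,T]}$, so $\kappa_j^{n_k}\to\bar\kappa_j$ uniformly on $[0,T]$.

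Passage to the limit in Definition \ref{def04}(\ref{def04.08}) is then routine for the linear terms, while $f(y^{n_k})\to f(\bar y)$ strongly in $\Lp{2}{Q_T}$ by Lipschitz continuity; the control term is handled by the splitting
\begin{displaymath}
\hat u_{g_j}^{n_k}\kappa_j^{n_k} - \hat u_{g_j}\bar\kappa_j \ =\ (\kappa_j^{n_k}-\bar\kappa_j)\hat u_{g_j}^{n_k} + \bar\kappa_j(\hat u_{g_j}^{n_k}-\hat u_{g_j}),
\end{displaymath}
whose first summand vanishes strongly in $\Lp{2}{Q_T}$ (uniform convergence of $\kappa_j^{n_k}$ against the $\Lp{2}{\Omega}$-bounded $\hat u_{g_j}^{n_k}$) and whose second summand, tested against any $\phi\in\eLp{2}{\Ha{1}}$, reduces to pairing $\hat u_{g_j}^{n_k}-\hat u_{g_j}$ against the fixed $\Lp{2}{\Omega}$-element $\int_0^T\bar\kappa_j\phi\,dt$ and so vanishes by weak convergence in $\Lp{2}{\Omega}$. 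For Definition \ref{def04}(\ref{def04.10}) the critical ingredient is the a.e.-$t$ limit
\begin{displaymath}
\int_\Omega \hat u_{h_k}^{n_k}(y^{n_k}-y^*)\,dx \ \longrightarrow\ \int_\Omega \hat u_{h_k}(\bar y - y^*)\,dx,
\end{displaymath}
obtained by the analogous splitting: the part with $\hat u_{h_k}^{n_k}-\hat u_{h_k}$ converges by the pointwise-in-$t$ weak convergence tested against the fixed slice $\bar y(\,.\,,t)-y^*(\,.\,,t)$, and the part with $y^{n_k}-\bar y$ by Cauchy--Schwarz and the a.e.-$t$ strong convergence of $y^{n_k}(\,.\,,t)$. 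Continuity of $w_k$, convergence of the scalar weights $\hat u_{\alpha_{jk}}^{n_k}\to\hat u_{\alpha_{jk}}$ in $\R$, and dominated convergence (the integrands being uniformly bounded thanks to Theorem \ref{theorem310} and the hypothesis $y^*\in\eLp{\infty}{\Lp{2}{\Omega}}$) then carry the limit through the time integral against any $\xi\in\Lp{2}{0,T}$. The initial conditions pass to the limit because $X^2\hookrightarrow \Ce{\Lp{2}{\Omega}}\times(\Cont{[0,T]})^J$ and evaluation at $t=0$ is weakly continuous on $X^2$.

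The limit $(\bar y,\bar\kappa_1,\ldots,\bar\kappa_J)$ is therefore a weak solution of (\ref{eqnmain1})--(\ref{eqnmain3}) for the control $\hat u$ and the prescribed initial data, so the uniqueness part of Corollary \ref{theorem04.2} forces $(\bar y,\bar\kappa_1,\ldots,\bar\kappa_J)=(\widetilde y,\widetilde\kappa_1,\ldots,\widetilde\kappa_J)$, completing the argument. The main technical obstacle I anticipate is the pointwise-in-$t$ convergence of the measurement functional: the weak convergence of $\hat u_{h_k}^n$ in $\Lp{2}{\Omega}$ is purely spatial and does not directly interact with the $t$-variable, so it has to be coupled carefully with the a.e.-$t$ strong $\Lp{2}{\Omega}$-convergence of $y^{n_k}$ delivered by Aubin--Lions, and a uniform $t$-majorant coming from Theorem \ref{theorem310} must be exhibited in order to invoke dominated convergence under the time integral against $\xi\in\Lp{2}{0,T}$.
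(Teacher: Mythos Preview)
Your argument is correct and follows the same architecture as the paper's proof: the uniform $X^2$-bound from Theorem \ref{theorem310}, weak-$*$ compactness in $X^2$, Aubin--Lions for strong $\Lp{2}{Q_T}$-convergence of $y^{n_k}$, passage to the limit term by term, and identification of the limit via uniqueness. The treatment of the measurement functional $W_j$---the pointwise-in-$t$ weak limit for the $h_k$-part combined with dominated convergence---is exactly the paper's $F^n(t)$ argument.

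The one place where you depart from the paper is the bilinear control term $\kappa_j^{n_k}\hat u_{g_j}^{n_k}$. The paper establishes its weak $\Lp{2}{Q_T}$-convergence by testing against tensor products $\phi^\Omega(x)\phi^T(t)$ and invoking Stone--Weierstrass to obtain density in $\Lp{2}{Q_T}$. Your route---using the compact embedding $\Wnp{1,2}{0,T}\hookrightarrow\Cont{[0,T]}$ to upgrade $\kappa_j^{n_k}\to\bar\kappa_j$ to uniform convergence, then splitting $\kappa_j^{n_k}\hat u_{g_j}^{n_k}-\bar\kappa_j\hat u_{g_j}$ into a strongly vanishing piece and a piece that vanishes when paired with the fixed $\Lp{2}{\Omega}$-element $\int_0^T\bar\kappa_j\phi\,dt$---is more elementary and avoids the density argument entirely, at the modest cost of a further subsequence extraction. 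Both arguments are valid; yours exploits a compactness that the paper leaves on the table.
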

\begin{proof}
Let $\hat{u}^n\rightharpoonup\hat{u}$ in $U$, as in the assumptions. A weakly convergent sequence is bounded, thus by Theorem \ref{theorem310} $(y^n,\kappa_1^n,\ldots,\kappa_J^n)$ is bounded in $X^2$. This allows us to extract a weakly-$*$ convergent subsequence (for simplicity, we relabel it and keep the original indexes): $(y^n,\kappa_1^n,\ldots,\kappa_J^n)\stackrel{\ast}{\rightharpoonup}(\bar{y},\bar{\kappa}_1,\ldots,\bar{\kappa}_J)$ in $X^2$ for certain $(\bar{y},\bar{\kappa}_1,\ldots,\bar{\kappa}_J)\in X^2$. In particular:
\begin{equation}\begin{array}{ll}
y^n\stackrel{\ast}{\rightharpoonup}\bar{y} & \textrm{in $\eLp{\infty}{\Lp{2}{\Omega}}$}\\
{y^n}'\rightharpoonup\bar{y}' & \textrm{in $\eLp{2}{\dual{\Ha{1}}}$}\\
\nabla y^n\rightharpoonup\nabla\bar{y} & \textrm{in $\left(\Lp{2}{Q_T}\right)^d$}\\
\kappa_j^n\stackrel{\ast}{\rightharpoonup}\bar{\kappa}_j &\textrm{in $\Lp{\infty}{0,T}$}\\
{\kappa_j^n}'\rightharpoonup{\bar{\kappa}_j}' &\textrm{in $\Lp{2}{0,T}$}\\
\end{array}
\label{eqn360}\end{equation}

It suffices to show that $(\bar{y},\bar{\kappa}_1,\ldots,\bar{\kappa}_J)=(\widetilde{y},\widetilde{\kappa}_1,\ldots,\widetilde{\kappa}_J)$. For this reason we need to prove that we can pass with $n$ to infinity in all terms appearing in the weak formulation given in Definition \ref{def04} 
The passage in linear terms follows straight due to (\ref{eqn360}). We are left to deal with the terms
\begin{displaymath}
\int_0^T (\kappa_j^n \hat{u}_{g_j}^n,\phi)_{\Lp{2}{\Omega}}\:dt,\quad \int_0^T (f(y^n),\phi)_{\Lp{2}{\Omega}}\:dt,\quad \int_0^T W_j(y^n,y^*)\xi\:dt
\end{displaymath}
for $\phi\in\eLp{2}{\Ha{1}}$, $\xi\in\Lp{2}{0,T}$.

Let us begin with the term corresponding to $\kappa_j^n \hat{u}_{g_j}^n$. By the assumption and by (\ref{eqn360}), $\hat{u}_{g_j}\rightharpoonup\hat{u}$ in $\Lp{2}{\Omega}$ and $\kappa_j^n\rightharpoonup\bar{\kappa_j}$ in $\Lp{2}{0,T}$. But this means that for arbitrary $\phi^\Omega\in \Cont{\bar{\Omega}}$ and $\phi^T\in \Cont{[0,T]}$ we have
\begin{displaymath}\begin{array}{cl}
								&\int_0^T (\kappa_j^n \hat{u}_{g_j}^n,\phi^\Omega\phi^T)_{\Lp{2}{\Omega}} \:dt = \int_0^T \kappa_j^n \phi^T \:dt \int_\Omega \hat{u}_{g_j}^n  \phi^\Omega\:dx \longrightarrow\\
\longrightarrow	& \int_0^T \bar{\kappa}_j \phi^T \:dt \int_\Omega \hat{u}_{g_j}  \phi^\Omega\:dx = \int_0^T (\bar{\kappa}_j \hat{u}_{g_j},\phi^\Omega\phi^T)_{\Lp{2}{\Omega}} \:dt 
\end{array}\end{displaymath}
To conclude that the weak convergence of $\kappa_j^n \hat{u}_{g_j}^n$ to $\bar{\kappa}_j \hat{u}_{g_j}$ in $\Lp{2}{Q_T}$ holds it suffices to justify that  $\kappa_j^n \hat{u}_{g_j}^n$ is bounded in $\Lp{2}{Q_T}$ and the set of functions $\phi$ of form $\phi(x,t)=\phi^\Omega(x)\phi^T(t)$, where $\phi^\Omega$ and $\phi^T$ are as above, is linearly dense in $\Lp{2}{Q_T}$. The former is straightforward by the weak convergence properties of $\kappa_j^n$ and $\hat{u}_{g_j}^n$. Concerning the latter, by the Stone-Weierstrass theorem (see \cite[Chap. 0.2, p.9]{Yosida66}), the set of all possible $\phi$ is dense in $C(\bar{Q}_T)$ and the latter set is linearly dense in $\Lp{2}{Q_T}$. Altogether, the following can be stated:
\begin{equation}\begin{array}{ll}
\kappa_j^n \hat{u}_{g_j}^n\rightharpoonup \bar{\kappa}_j \hat{u}_{g_j} & \textrm{in $\Lp{2}{Q_T}$}\\
\end{array}\label{eqn362}\end{equation}

Guaranteeing the convergence of the remaining two terms will involve the knowledge on the strong convergence of $y^n$ in $\Lp{2}{Q_T}$. But by the bounds for $y^n$ and ${y^n}'$ in (\ref{eqn360}) and by the Aubin-Lions lemma (see \cite[Chap III.1. Prop. 1.3]{sho} for the probably most common formulation of the lemma or \cite[Sec. 8 Cor. 4]{sim} for a more general statement), there exists a subsequence such that
\begin{equation}\begin{array}{ll}
y^n\rightarrow\bar{y} & \textrm{in $\Lp{2}{Q_T}$}\\
\end{array}
\label{eqn364}\end{equation}
The limit in (\ref{eqn364}) is exactly $\bar{y}$ since otherwise it would be a contradiction to (\ref{eqn360}).

By the Lipschitz continuity of $f$ and (\ref{eqn364}) the convergence
\begin{equation}\begin{array}{ll}
f(y^n)\rightarrow f(\bar{y}) & \textrm{in $\Lp{2}{Q_T}$}\\
\end{array}
\label{eqn366}\end{equation}
is a straightforward conclusion.

We are left to investigate the convergence of the term corresponding to $W_j(y^n,y^*)$. Note that by the definition (see (\ref{eqnmain3})), $W_j$ has an implicit dependence on $\hat{u}_{h_k}^n$ and $\hat{u}_{\alpha_{jk}}^n$. Thus in the present context we should interpret $W_j$ as $W_j(\hat{u}_{h_k}^n,\hat{u}_{\alpha_{jk}}^n,y^n,y^*)$. By (\ref{eqnmain3}) and the Lipschitz continuity of $w_k$ we can write, using the triangle inequality:

\begin{equation}
\begin{array}{cl}
	&\int_0^T \hnorm{W_j(\hat{u}_{h_k}^n,\hat{u}_{\alpha_{jk}}^n,y^n(t),y^*(t)) - W_j(\hat{u}_{h_k},\hat{u}_{\alpha_{jk}},\bar{y}(t),y^*(t))}^2\:dt\\
\leq	&\begin{array}[t]{lcl}
	2\sum_{k=1}^K L_k \big\{ &\quad &\hnorm{\hat{u}_{\alpha_{jk}}^n}^2\xnorm{\hat{u}_{h_k}^n}{2}^2\int_0^T \xnorm{y^n - \bar{y}}{2}^2\:dt\\
	    &+& \hnorm{\hat{u}_{\alpha_{jk}}^n}^2\int_0^T\hnorm{\int_\Omega (\hat{u}_{h_k}^n - \hat{u}_{h_k})(\bar{y}-y^*)\:dx}^2\:dt\\
	    &+& \hnorm{\hat{u}_{\alpha_{jk}}^n - \hat{u}_{\alpha_{jk}}}^2\xnorm{\hat{u}_{h_k}}{2}^2\int_0^T \xnorm{\bar{y}-y^*}{2}^2\:dt \quad \big\}\\
	\end{array}\\
\end{array}
\label{eqn368}\end{equation}
Let us consider each of the three terms appearing in the right hand side of the above.

The first term in the right hand side of (\ref{eqn368}) converges to zero since the sequence of controls $\hat{u}^n$ is bounded and (\ref{eqn364}) holds. 

The third term in the right hand side of (\ref{eqn368}) is convergent to zero since by $\hat{u}^n\rightharpoonup\hat{u}$ in $U$ we have $\hat{u}_{\alpha_{jk}}^n\rightarrow \hat{u}_{\alpha_{jk}}$.

To treat the second term, consider a function
\begin{displaymath}
F^n(t) = \int_\Omega (\hat{u}_{h_k}^n - \hat{u}_{h_k})(\bar{y}(t)-y^*(t))\:dx
\end{displaymath}
As the sequence of numbers $\hnorm{\hat{u}_{\alpha_{j,k}}^n}^2$ in the considered term is bounded, it is enough to show the convergence of $F^n$ to zero in $\Lp{2}{0,T}$. We have $\bar{y}(t),y^*(t)\in\Lp{2}{\Omega}$ a.e. on $[0,T]$. Thus, by the weak convergence  $\hat{u}_{h_k}^n\rightharpoonup \hat{u}_{h_k}$ in $\Lp{2}{\Omega}$ for every $k=1,\ldots,K$ we infer that $F^n(t)$ converges to zero a.e. on $[0,T]$, as $n\rightarrow\infty$. Moreover, a.e. on $[0,T]$
\begin{displaymath}
\hnorm{F^n(t)}\ \leq\ \xnorm{\hat{u}_{h_k}^n - \hat{u}_{h_k}}{2}\xnorm{\bar{y}(t)-y^*(t)}{2}\ \leq\ C_U \xnorm{\bar{y}(t)-y^*(t)}{2}
\end{displaymath}
where $C_U = \sup_{n}\xnorm{\hat{u}^n}{U}$ is finite and the term $\xnorm{\bar{y}(t)-y^*(t)}{2}$ is square integrable due to $\bar{y},y^*\in\Lp{2}{Q_T}$. These observations concerning $F^n(t)$ allow us to apply the the Lebesgue dominated convergence theorem (see \cite[Chap. 1]{rud.realcomplex.en} or \cite[App. E.3, Th. 5]{ev}) and get the convergence
\begin{displaymath}
F^n \ \rightarrow 0 \quad \textrm{in $\Lp{2}{0,T}$}
\end{displaymath}

Altogether, we conclude that the right hand side of (\ref{eqn368}) converges to zero thus:
\begin{equation}\begin{array}{ll}
W_j(\hat{u}_{h_k}^n,\hat{u}_{\alpha_{jk}}^n,y^n,y^*)\rightarrow W_j(\hat{u}_{h_k},\hat{u}_{\alpha_{jk}},\bar{y},y^*) & \textrm{in $\Lp{2}{0,T}$}\\
\end{array}
\label{eqn370}\end{equation}

To sum up, the convergence results (\ref{eqn360}), (\ref{eqn362}), (\ref{eqn366}), (\ref{eqn370}) allow us to infer that $(\bar{y},\bar{\kappa}_1,\ldots,\bar{\kappa}_J)$ is the weak solution of system (\ref{eqnmain1}) - (\ref{eqnmain3}) in sense of the Definition \ref{def04}, corresponding to $\hat{u}$, i.e. $(\bar{y},\bar{\kappa}_1,\ldots,\bar{\kappa}_J)=(\widetilde{y},\widetilde{\kappa}_1,\ldots,\widetilde{\kappa}_J)$ what concludes the proof.
\end{proof}

%%%NUMERICAL EXAMPLES FOR THE THERMOSTATS MODEL
\section{Numerical prototypes}\label{sec:CP.numexamples}\setcounter{equation}{0}

The present section is devoted to numerical simulations for illustrating some of the properties of the thermostats control system utilized in (\ref{eqnmain1}) - (\ref{eqnmain3}). Before we pass to presentation of the numerical results (Section \ref{sec:CP.numexamples.results}), we describe precisely the configuration of system (\ref{eqnmain1}) - (\ref{eqnmain3}) which was utilized in the experiments (Section \ref{sec:CP.numexamples.assumptions.structural}) and give a description of utilized numerical methods (Section \ref{sec:CP.numexamples.assumptions.numerics}).

In Section \ref{sec:CP.numexamples.results}, we consider three situations. The first concerns the behavior of the \tcb{thermostats control system} when it is focused on a task of preserving an unstable state (Section \ref{sec:CP.numexamples.results:exp_unstable}). The second one concerns an attempt of comparison of efficiency of the \tcb{thermostats control system} for various initial conditions (Section \ref{sec:CP.numexamples.results:exp_icond}). 
The third one compares the behavior of the \tcb{thermostats control system} when two different choices of the control and measurement devices are considered (Section \ref{sec:CP.numexamples.results:exp_linefb}).

If the behavior of the process stabilizes for large times, it is meaningful to measure the quality of a particular control $\hat{u}\in U$ as some functional of the process state in the final time of the experiment. The stabilization occurs to be the case in the experiments described in Section \ref{sec:CP.numexamples.results}. Hence, we emphasize the results concerning the \tcb{thermostats control system} efficiency understood in the above manner.

\subsection{Structural assumptions}\label{sec:CP.numexamples.assumptions.structural}

In the experiments described in Section \ref{sec:CP.numexamples.results} the below assumptions were made.

We assumed, that every control device in system (\ref{eqnmain1}) - (\ref{eqnmain3}) is described by a characteristic function of a disc centered at $x_{j}\in\Omega$ of the same radius, times a constant. We also put $K=J$ and made an analogous assumption for the measurement devices. More precisely, the devices were determined by 
\begin{equation}
g_j := \hat{u}_{g_j}:=\sigma_g(\,.\, - x_{j})|_\Omega \qquad h_j(x) := \hat{u}_{h_j} := \sigma_h(\,.\, - x_{j})|_\Omega
\label{asm:01.2}\end{equation}
for $x_j\in\Omega$, $j=1,\ldots,J$ and for:
\begin{equation}
\sigma_g(x) = C_g\mathbf{1}_{B(0,r_\sigma)}(x) \qquad \sigma_h(x) = C_h\mathbf{1}_{B(0,r_\sigma)}(x)
\label{asm:01.4}\end{equation} 
for certain $r_\sigma,C_g,C_h>0$. In other words, every control device was covered by exactly one measurement device, however they could differ in the height parameter.

In this situation, the following assumptions concerning the weights were natural: we set $\alpha_{jk} := \hat{u}_{\alpha_{jk}} := \delta_{j,k}$, where $\delta_{j,k}$ denotes the Kronecker delta function of $j$ and $k$.

Having the above, the control $\hat{u} := \left({g_j},{h_j},{\alpha_{jk}}\right)_{j=1,\ldots,J}\in U$ is determined once a selection of the points $x_{1},\ldots,x_{J}$ and the parameters $r_\sigma,C_g,C_h > 0$ is made.

The above assumptions result in a simplified version of the model (\ref{eqnmain1}) - (\ref{eqnmain3}), which is a focus of our interest in the present Section, concerning the numerical results:
\begin{equation}
\left\{\begin{array}{ll}
y_t(x,t) - D\Delta y(x,t) = f(y(x,t)) + \sum_{j=1}^J g_j(x)\kappa_j(t) & \textrm{on $Q_T$}\\
\frac{\partial y}{\partial n} = 0 & \textrm{on $\partial\Omega\times (0,T)$}\\
y(0,x)=y_0(x) & \textrm{for $x\in\Omega$}
\end{array}\right.
\label{eqnmain1num}\end{equation}
together with
\begin{equation}
\left\{\begin{array}{ll}
\beta_1\kappa_1'(t) + \kappa_1(t) = w_1\left( \int_\Omega h_1(x) (y - y^*) dx \right)& \textrm{on $[0,T]$}\\
\vdots &\vdots\\
\beta_J\kappa_J'(t) + \kappa_J(t) = w_J\left( \int_\Omega h_J(x) (y - y^*) dx \right)& \textrm{on $[0,T]$}\\
\kappa_j(0)=\kappa_{j0}\in\R & \textrm{for $j=1,\ldots,J$} 
\end{array}\right.
\label{eqnmain2num}\end{equation}
for $g_j$ and $h_j$ functions defined by (\ref{asm:01.2}) and (\ref{asm:01.4}).

The experiments were performed for a two-dimensional rectangular domain: 
\begin{equation}
\Omega=(-1,1)\times (-1,1)\subset\R^2
\label{asm:02}\end{equation}

It was assumed that $y^*$ was time independent: $y^* = y^*(x)$.

The reactive term $f$ treated in the experiments was:
\begin{equation}
 f(s) = -s^3 + s
\label{asm:03.f}\end{equation}
together with $w_j$, $j=1,\ldots,J$ given by
\begin{equation}
w_j(s) = H_w \max(\min(L_w s, 1),-1)
\label{asm:03.w}\end{equation}
for certain $L_w$, $H_w$.

For a given $r_\sigma$, we considered the value of $C_h$ to be determined by the following relation:
\begin{equation}
C_{switch} \int_{\R^d} \sigma_h = 1/\hnorm{L_w}
\label{asm:06}\end{equation}
for certain $C_{switch}>0$. In the above, $C_h$ is present in the definition of $\sigma_h$. Identity (\ref{asm:06}) along with definition of $\sigma_h$ in (\ref{asm:01.4}) allows to infer that
\begin{equation}
C_h = \left( \pi\,\hnorm{L_w}\,C_{switch}\, r_\sigma^2 \right)^{-1}
\label{eqn:010}\end{equation}

For better explanation of the meaning of the constant $C_{switch}>0$, we make the following remark. Let us temporary call the term $w_j\left( \int_\Omega h_j (y - y^*) \right)$ in the right hand side of \ref{eqnmain2num} \emph{a signal demand} of $j$-th measurement device. The concept is that $C_{switch}$ defines a threshold deviation between the solution $y$ and the reference state $y^*$ after exceeding which the extremal value of signal demand is returned to $j$-th signal generator by $j$-th measurement device. Being more precise, for a given measurement device described by a function $h_j$ we want the signal demand to achieve its maximal value when $y-y^* \approx C_{switch}$ or $y -y^*\approx -C_{switch}$, at least in the support of $h_j$. But taking the formula for $w_j$ into account, its extremal value is achieved if $\int_\Omega h_j (y - y^*) \approx\pm 1/\hnorm{L_w}$. Processing the above conditions yields
\begin{displaymath}
1/\hnorm{L_w} \ \approx\ \int_\Omega h_j \hnorm{y - y^*} \ \approx\  C_{switch} \int_\Omega h_j 
\end{displaymath}
This gives relation (\ref{asm:06}) after assuming that $\approx$ sign can be replaced by the equality sign and assuming that $\int_\Omega h_j = \int_{\R^d} \sigma_h$. The latter is correct if $\textrm{supp}(\sigma_h(\,.\,-x_j))\subset\Omega$. We considered it as a usual situation in order to give clear interpretation of constant $C_{switch}$. However it can be not true in general.

Altogether, for $\Omega$ given by (\ref{asm:02}), the reactive term as in (\ref{asm:03.f}), the switching function $w_j$ as in (\ref{asm:03.w}), $g_j$, $h_j$ defined in the formula (\ref{asm:01.2}) with (\ref{asm:01.4}) and $C_h$ as in the formula (\ref{eqn:010}), system (\ref{eqnmain1num}) - (\ref{eqnmain2num}) is uniquely determined by the choice of the following quantities:
\begin{displaymath}
\begin{array}{ll}
y_0,\ \kappa_{10},\ldots,\kappa_{J0},\quad y^* 	& \quad J,\quad x_{1},\ldots,x_{J}\\
T,\quad D,\beta_1,\ldots,\beta_J, 			& \quad r_\sigma, C_g, C_{switch},L_w,H_w\\
\end{array}
\end{displaymath}
The values of the above quantities utilized in the particular experiments will be specified in Section \ref{sec:CP.numexamples.results}.

\subsection{Assumptions for the numerical scheme}\label{sec:CP.numexamples.assumptions.numerics}
The below numerical methods were utilized in the experiments described in Section \ref{sec:CP.numexamples.results}.

For numerical treatment of system (\ref{eqnmain1num}) - (\ref{eqnmain2num}) we utilized the finite element method to solve the component $y$ corresponding to the parabolic equation.

The triangulation of the $\Omega$, see (\ref{asm:02}), was of the type presented on Figure \ref{fig:triangulation}.
\begin{figure}[htb]
\begin{center}
\includegraphics[width = 0.245\textwidth]{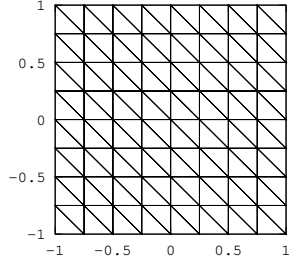}
\caption{The type of triangulation utilized in the experiment.\label{fig:triangulation}}
\end{center}
\end{figure}
The finite element space chosen for the simulations was the space of continuous functions, linear on every element of the triangulation. The implicit Euler scheme was used for the purpose of the time discretization of the model. The nonlinear terms $f$ and $w$ were treated with use of the Picard iterations technique. 

The constant amount of the Picard iterations for every time step was assumed instead of applying the error-based stop criterion in order to control the computational time.

In the further part of our work, we will use the following notation concerning the above described numerical scheme:\\
\begin{tabular}{lcl}
$N+1$				& --- & The number of vertices in the mesh along each spatial direction,\\
$h$				& --- & The length of the mesh step along each spatial direction,\\
$M+1$				& --- & The numer of points in the time discretization,\\
$\tau$		&	--- & The length of the time step,\\
$N_{Picard}$		&	--- & The number of Picard iterations in every time step\\
\end{tabular}\\
According to the above notation, the total number of vertices in the triangulation equals $(N+1)^2$. Moreover, the relations $h = N^{-1}$ and $\tau = M^{-1}$ hold. 

Note, that the numerical scheme that we apply is fully determined (up to the matrix assembly method) by the choice of the parameters determining the finite element space, the time discretization scheme and the nonlinear term treatment method, i.e. by the following parameters:
\begin{displaymath}
 N,\qquad M,\qquad N_{Picard}
\end{displaymath}
The values of the above parameters utilized in the particular experiments will be specified in section \ref{sec:CP.numexamples.results}.

\subsection{Results of the simulations}\label{sec:CP.numexamples.results}
\newcounter{experiment.CP}
\setcounter{experiment.CP}{1}

Now we proceed to presentation of the results announced in the introduction to Section \ref{sec:CP.numexamples}. The described below experiments were performed with use of the methods from Section \ref{sec:CP.numexamples.assumptions.numerics} and under structural assumptions from Section \ref{sec:CP.numexamples.assumptions.structural}.

In the presentation, some plots appear and thus we give a short clarification of the utilized plot convention here. The plots can be grouped into certain classes: the plots of the main component $y$ of the numerical solution of system (\ref{eqnmain1num}) - (\ref{eqnmain2num}), in a given moment of time, the plots of initial conditions or reference states, the plots concerning configuration of the control devices and the error plots. While the error plots are self-describing, the rest of the plots need to be commented.

In the plots of the main component $y$ of the numerical solution of system (\ref{eqnmain1num}) - (\ref{eqnmain2num}), in a given moment of time, the color map extends from black for certain down limit value $\texttt{min}$ to white for certain upper limit value \texttt{max}. The values exceeding the interval $[\texttt{min},\texttt{max}]$ are truncated to the values $\texttt{min}$ or $\texttt{max}$. The same convention holds for the plots of the $y_0$ components of the initial condition of system (\ref{eqnmain1num}) - (\ref{eqnmain2num}) and the plots of the reference state $y^*$. 

The default values of the color map parameters are $\texttt{min} = -1$ and $\texttt{max} = +1$. If the values $\texttt{min}$ or $\texttt{max}$ are different than the mentioned default ones, then it is indicated in the legend of the plot.

The plots concerning the configuration of the control and measurement devices are visualizations of supports of the functions $g_j$ and $h_j$ describing the control and measurement devices. This make sense since due to the structural assumptions in the Section \ref{sec:CP.numexamples.assumptions.structural}, the supports of the functions describing the control and measurement devices are pairwise equal. The difference between $g_j$ and $h_j$ for a given $j=1,\ldots,J$ can occur only in height parameters $C_g$ and $C_h$, appearing in (\ref{asm:01.4}). Thus, such visualizations, if sufficiently precise, give an unique characterization of the parameter $r_\sigma$ and of the utilized sequence of the central points, $x_1,\ldots,x_J$, appearing in in (\ref{asm:01.4}).

\begin{figure}[htb]
  \begin{center}
  \begin{subfigure}{0.245\textwidth}\begin{center}
  \includegraphics[width = \textwidth]{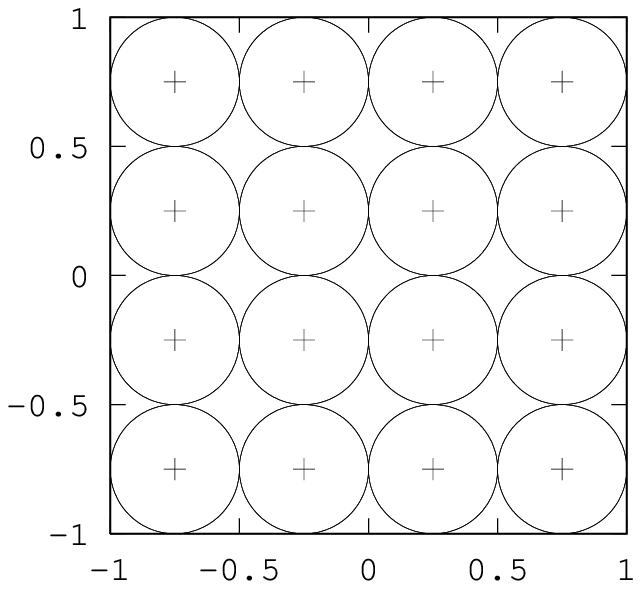}
  \caption{16 devices\label{fig:devices16_a}}
  \end{center}\end{subfigure}
  \begin{subfigure}{0.245\textwidth}\begin{center}
  \includegraphics[width = \textwidth]{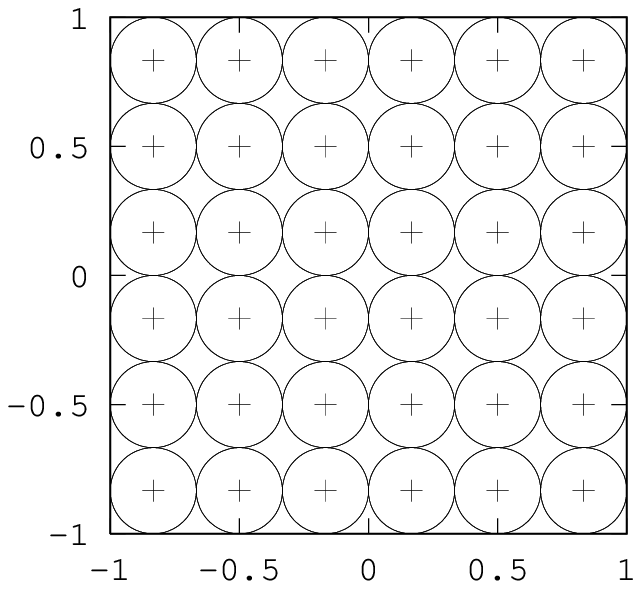}
  \caption{36 devices\label{fig:devices36_a}}
  \end{center}\end{subfigure}
  \begin{subfigure}{0.245\textwidth}\begin{center}
  \includegraphics[width = \textwidth]{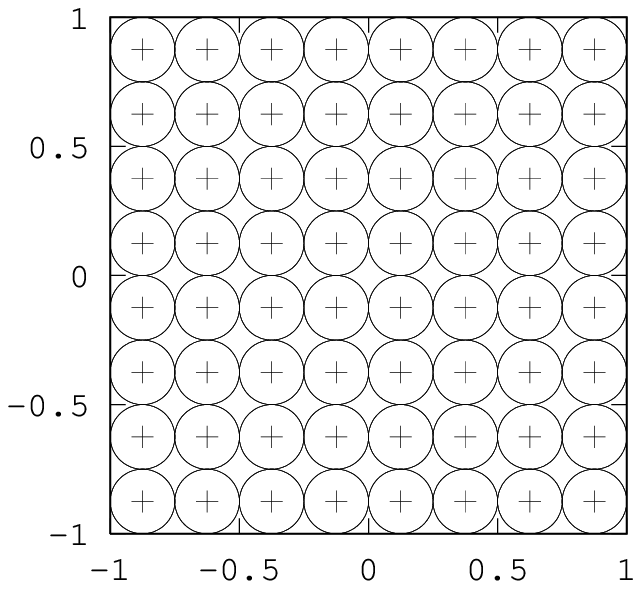}
  \caption{64 devices\label{fig:devices64_a}}
  \end{center}\end{subfigure}
  \begin{subfigure}{0.245\textwidth}\begin{center}
  \includegraphics[width = \textwidth]{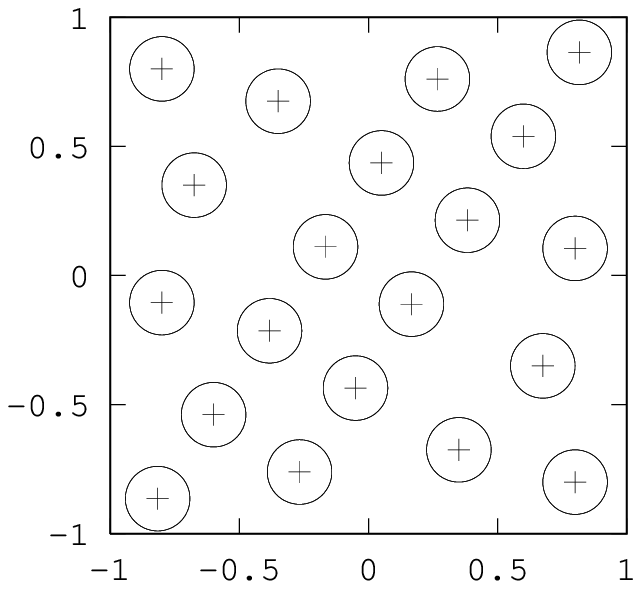}
  \caption{20 devices\label{fig:devices20_a}}
  \end{center}\end{subfigure}
  \caption{Control and measurement devices configurations for Section \ref{sec:CP.numexamples.results}.}\label{fig:sub:devconfdata}
  \end{center}
\end{figure}

\begin{figure}[htb]
  \begin{center}
  \begin{subfigure}{0.245\textwidth}\begin{center}
  \includegraphics[width = \textwidth]{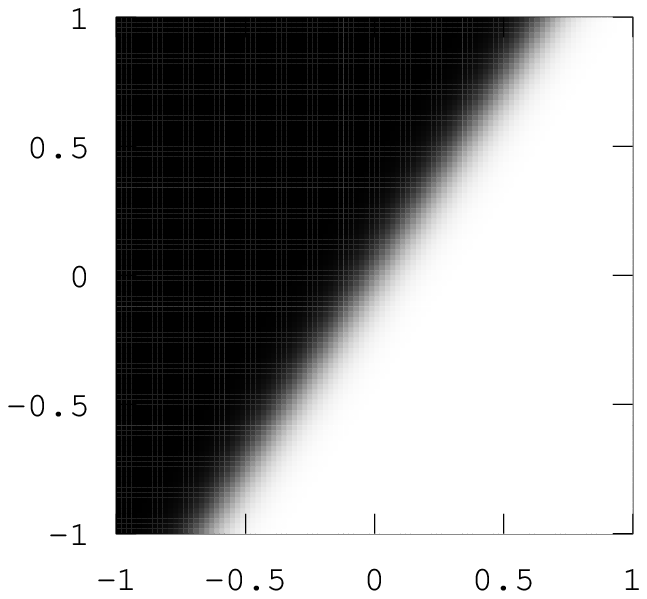}
  \caption{A reference state.\label{fig:refstate0_icond}}
  \end{center}\end{subfigure}
  \begin{subfigure}{0.245\textwidth}\begin{center}
  \includegraphics[width = \textwidth]{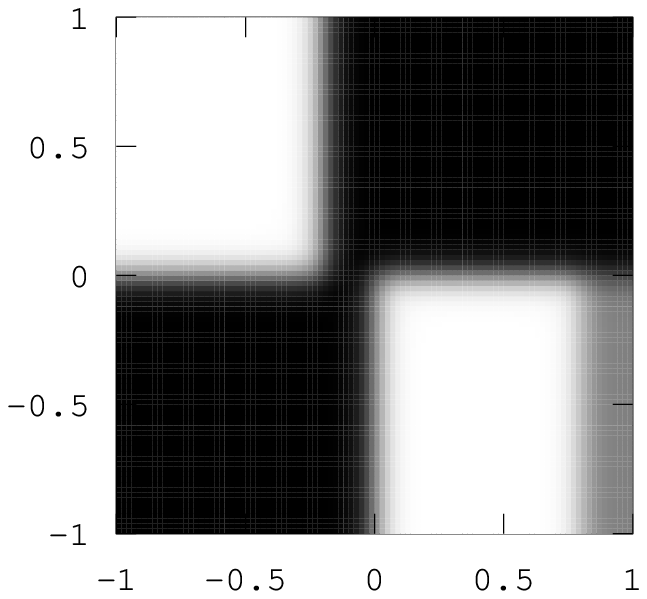}
  \caption{Init. cond., 1st variant\label{fig:icond0_icond}}
  \end{center}\end{subfigure}
  \begin{subfigure}{0.245\textwidth}\begin{center}
  \includegraphics[width = \textwidth]{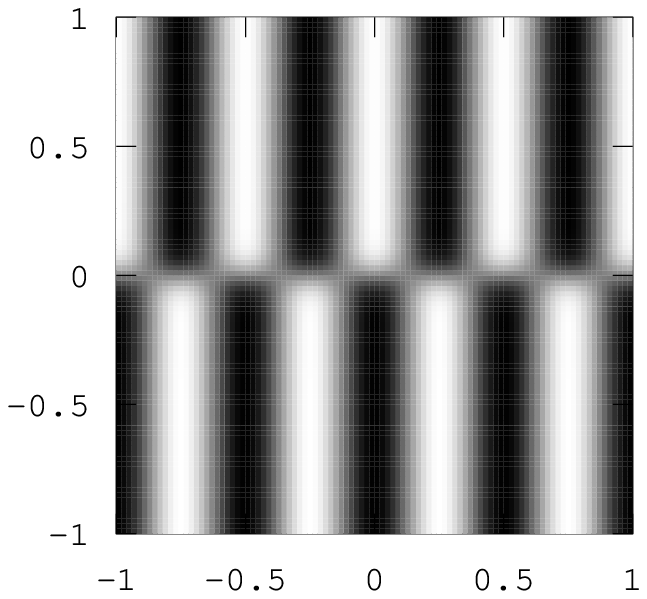}
  \caption{Init. cond., 2nd variant\label{fig:icond1_icond}}
  \end{center}\end{subfigure}
  \caption{A part of data employed for simulations in Section \ref{sec:CP.numexamples.results}.}\label{fig:sub:iconddata}
  \end{center}
\end{figure}

Figures \ref{fig:sub:devconfdata} and \ref{fig:sub:iconddata} present data which shall be utilized in the experiments below. The data employed in particular experiments will be specified in their description by reference to these figures.

Moreover, assume that $y^*\in \Ha{1}$ and that $y$ is the main component of numerical solution of (\ref{eqnmain1num}) - (\ref{eqnmain2num}), obtained with the methods described in Section \ref{sec:CP.numexamples.assumptions.numerics}. For the time discretization points $t=m\tau$, $m=0,\ldots,M$ we denote by $E_{y}(t)$ the $L^2$ error between $y$ and $y^*$:
\begin{displaymath}
E_{y}(t) = \xnorm{y(t) - y^*}{\Lp{2}{\Omega}}
\end{displaymath}
and by $E_{y}^{grad}(t)$ the gradient error between $y$ and $y^*$, or more precisely:
\begin{displaymath}
E_{y}^{grad}(t) = \xnorm{\nabla\left(y(t) - y^*\right)}{\Lp{2}{\Omega}}
\end{displaymath}

The below described simulations have been performed with use of the GNU Octave environment.

\subsubsection{Experiment \arabic{experiment.CP} --- thermostats model and an unstable equilibrium}\label{sec:CP.numexamples.results:exp_unstable}
\addtocounter{experiment.CP}{1}
The present experiment is intended to illustrate properties of the \tcb{thermostat control system} in a situation where the reference state is unstable. We want to emphasize the benefits, which can be observed in this particular experiment, of using the \tcb{thermostats control system} under consideration.

The following data were exploited for the present experiment:
\begin{displaymath}\begin{array}{llll}
T = 24 		&\quad C_g = 16/\pi		&\quad L_w = -10	&\quad\kappa_{j0} = 0\ \forall_{j=1,\ldots,J}\\
D = 0.01 	&\quad C_{switch} = 0.2		&\quad H_w = 10		&\\
\end{array}\end{displaymath}
together with the numerical scheme specification given by:
\begin{displaymath}
N = 100,\qquad M = 2400,\qquad N_{Picard} = 3
\end{displaymath}
We considered the initial condition $y_0$ as on Figure \ref{fig:icond0_icond} and the reference state $y^*\equiv 0$. Note that the $y^*$ taken into account indeed is an unstable state for the assumed reactive term $f$.

We have performed three simulations, basing on various configurations of the control and measurement devices. The cases of $J=16,36,64$, with the devices varying in size and tightly covering the domain have been considered. The utilized devices configurations are presented on the Figures \ref{fig:devices16_a}, \ref{fig:devices36_a} and \ref{fig:devices64_a}. 

In each of the three simulations, some stabilization of the process behavior occurred after an initial period of relatively strong oscillations. It could be observed that after this initial period there emerged certain relatively stable patterns which didn't underwent further big changes. 
However still, some slow evolution of the numerical solution could be observed in longer time horizon for the cases of $16$ ans $36$ devices. 
Nevertheless, the process states eventually achieved for time $t=T$ didn't exhibit any rapid changes so it may be considered as states that are close to certain equilibrium states of the considered model. However, the latter hypothesis require further work for better verification.

Now, let us comment on the quality of \tcb{thermostats control systems} associated with the addressed devices configurations. For many users the result on Figure \ref{fig:20121025_unstable_J16_T4} cannot be considered to be precise for the problem of leading the state of the process to the state $y^*\equiv 0$ in time $T$. Nevertheless, the situation was changing as we were increasing the amount of the devices, keeping their uniform distribution through the domain. Comparing the Figures \ref{fig:20121025_unstable_J16_T4}, \ref{fig:20121025_unstable_J36_T4} and \ref{fig:20121025_unstable_J64_T4} suggests that the greater the number of the control and measurement devices is, the more precise response of the control devices can be expected. This stays consistent with the natural intuition. 

\begin{figure}[htb]
  \begin{center}
  \begin{subfigure}{0.245\textwidth}\begin{center}
  \includegraphics[width = \textwidth]{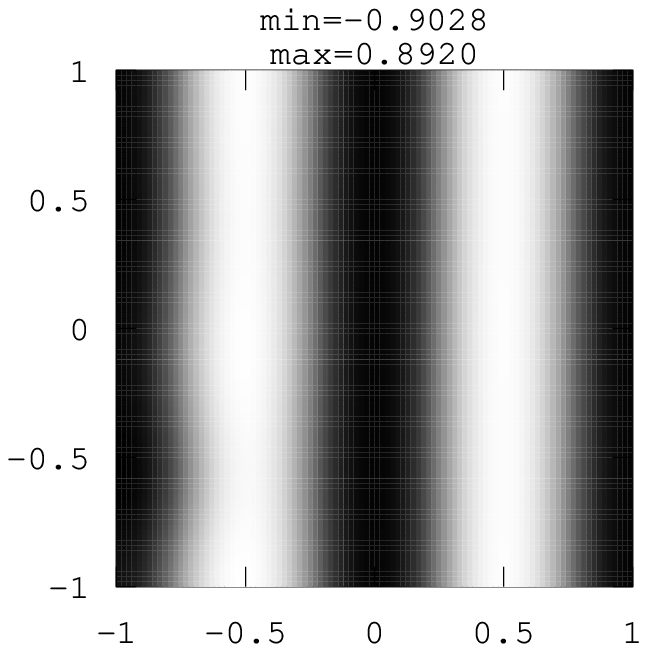}
  \caption{16 dev., $t=T$\label{fig:20121025_unstable_J16_T4}}
  \end{center}\end{subfigure}
  \begin{subfigure}{0.245\textwidth}\begin{center}
  \includegraphics[width = \textwidth]{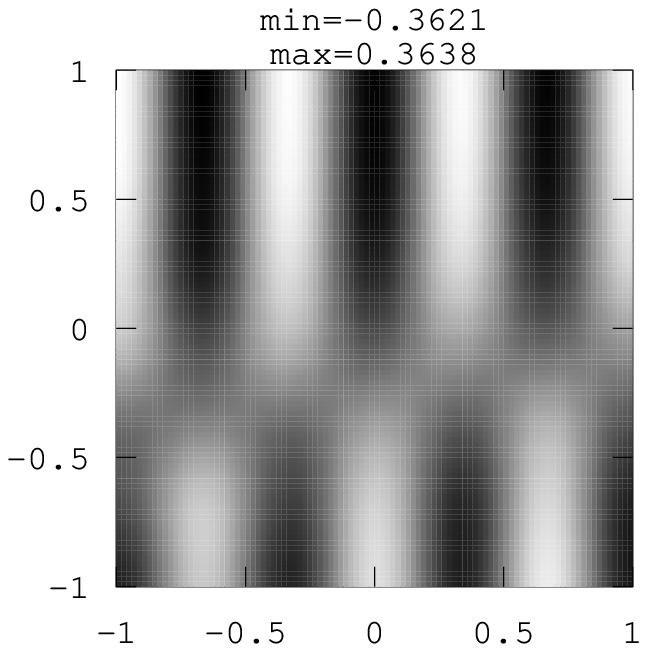}
  \caption{36 dev., $t=T$\label{fig:20121025_unstable_J36_T4}}
  \end{center}\end{subfigure}
  \begin{subfigure}{0.245\textwidth}\begin{center}
  \includegraphics[width = \textwidth]{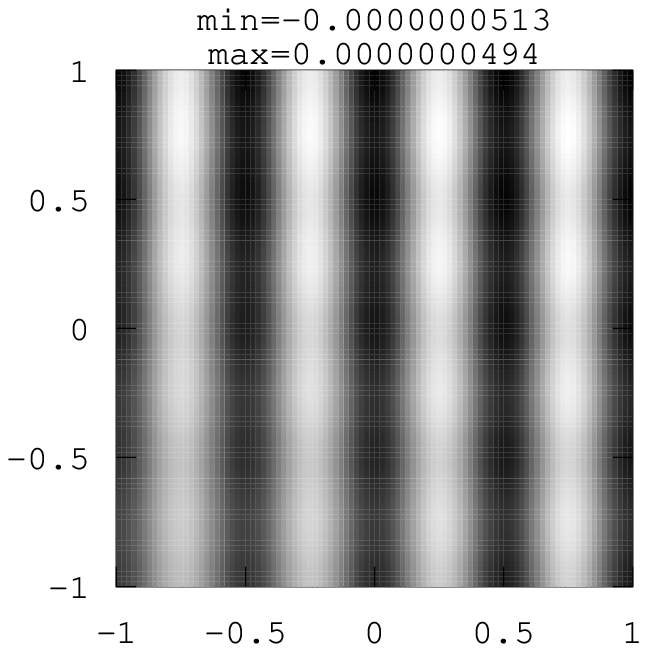}
  \caption{64 dev., $t=T$\label{fig:20121025_unstable_J64_T4}}
  \end{center}\end{subfigure}
  \caption{Solution plots in time $t=T$ for the devices configurations considered in Section \ref{sec:CP.numexamples.results:exp_unstable}. 
  Fig. \ref{fig:20121025_unstable_J16_T4} corresponds to the dev.~conf. in Fig. \ref{fig:devices16_a}; Fig. \ref{fig:20121025_unstable_J36_T4} --- to Fig. \ref{fig:devices36_a}; Fig. \ref{fig:20121025_unstable_J64_T4} --- to Fig. \ref{fig:devices64_a}. 
  The \texttt{min} and \texttt{max} values equal the extremal values of the corresponding solutions in time $t=T$.
  }\label{fig:sub:unstableresults}
  \end{center}
\end{figure}

\begin{figure}[htb]
\begin{center}
\begin{subfigure}{0.325\textwidth}\begin{center}
\includegraphics[width = \textwidth]{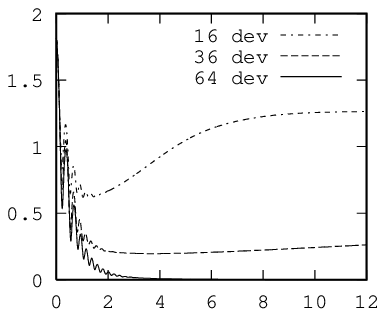}
\caption{$E_y(t)$ values (vert. axis) in time\\\rule{0pt}{1ex}\label{fig:unstable_error_square}}
\end{center}\end{subfigure}
\begin{subfigure}{0.325\textwidth}\begin{center}
\includegraphics[width = \textwidth]{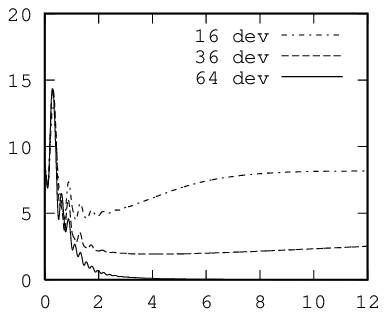}
\caption{$E^{grad}_y(t)$ values (vert. axis) in time\label{fig:unstable_error_gradient}}
\end{center}\end{subfigure}
\caption{$E_y(t)$ and $E^{grad}_y(t)$ for time points $t=m\tau$, $m=0,\ldots,M/2$ for simulations corresponding to the devices configurations considered in Section \ref{sec:CP.numexamples.results:exp_unstable}. For the sake of readability, the time horizon of the error plots is limited to $[0,12]$. After time $t=12$ the error values still evolves, however slowly, without rapid changes.}\label{fig:sub:unstableerror}
\end{center}
\end{figure}

The drastic difference between the efficiency of the \tcb{thermostats control system} for $16$ devices and the efficiency for the cases of $36$ and $64$ devices is well visible on the error plots in Figures \ref{fig:unstable_error_square} and \ref{fig:unstable_error_square}. The Reader may also compare the obtained error values in time $t=T$ in the below table (presented with rounding to $4$ significant digits, or more if necessary):
\begin{center}
\begin{tabular}{|l|r|r|r|}\hline
$y$ part for:		& $16$ dev. & $36$ dev. & $64$ dev. \\
\hline
$E_{y}(T)$		&1.3006 &0.3568 &5.5550e-08\\
$E_{y}^{grad}(T)$	&8.2791 &3.4143 &6.9999e-07\\
\hline\end{tabular}
\end{center}

Nevertheless, in the situation of the present experiment, the main question concerning the efficiency of the utilized control system can be reduced to the question on the amount of the devices which would be sufficient to achieve demanded precision. This is much simpler adjustment procedure than what can be often expected in case of the open-loop control systems. Suppose that we consider an open-loop system in which the user is responsible for the choice of right amount, size and locations of the control devices as well as the signal functions $\kappa_j$ given on $[0,T]$. In other words, equations (\ref{eqnmain2num}) are not taken into account. Such an open-loop control system is more difficult to handle that our closed-loop control model in (\ref{eqnmain1num}) - (\ref{eqnmain2num}), because the user has to control more variables. Necessary is the choice of the devices together with the signal functions in the introduced open-loop case, versus the choice of the devices only in the case of our closed-loop system. Moreover, in the open-loop situation a proper choice of the signal functions $\kappa_j$ is hard to be done by intuition. Probably proper signal would be searched by some optimization procedure, what additionally increases the complexity of efforts necessary to deal with the open-loop case.

\subsubsection{Experiment \arabic{experiment.CP} --- thermostats model and various initial conditions}\label{sec:CP.numexamples.results:exp_icond}
\addtocounter{experiment.CP}{1}
Below, we present numerical results which illustrate behavior occurring in the investigated model with control by thermostats when perturbations of the initial condition are induced.

In the present experiment, the following data were used :
\begin{displaymath}\begin{array}{llll}
T = 4 		&\quad r_\sigma = 1/8	&\quad L_w = -10	&\quad C_{switch} = 0.2\\
D = 0.02 	&\quad C_g = 16/\pi  	&\quad H_w = 10		&\quad\kappa_{j0} = 0\ \forall_{j=1,\ldots,J}\\
\end{array}\end{displaymath}
together with the numerical scheme specification given by:
\begin{displaymath}
N = 100,\qquad M = 400,\qquad N_{Picard} = 3
\end{displaymath}
The configuration of the control and measurement devices was assumed to be as the devices configuration with $J=64$ utilized in the experiment from the Section \ref{sec:CP.numexamples.results:exp_unstable}, i.e. as on Figure \ref{fig:devices64_a}. The reference state was as in Figure \ref{fig:refstate0_icond}.

Two simulations has been performed, with two variants of the $y_0$ component of the initial condition. The first of them was as in Figure \ref{fig:icond0_icond}, the second initial condition was as in Figure \ref{fig:icond1_icond}.

\begin{figure}[htb]
\begin{center}
\begin{subfigure}{0.245\textwidth}\begin{center}
\includegraphics[width = \textwidth]{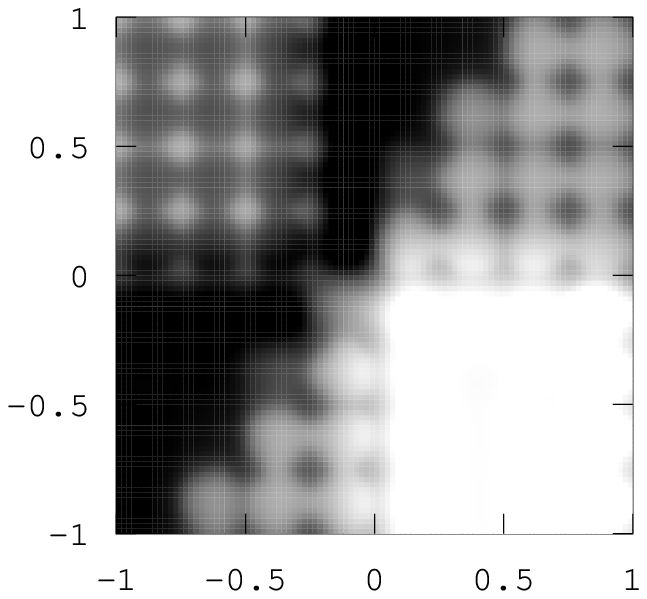}
\caption{1st variant, $t=0.25$\label{fig03icond}}
\end{center}\end{subfigure}
\begin{subfigure}{0.245\textwidth}\begin{center}
\includegraphics[width = \textwidth]{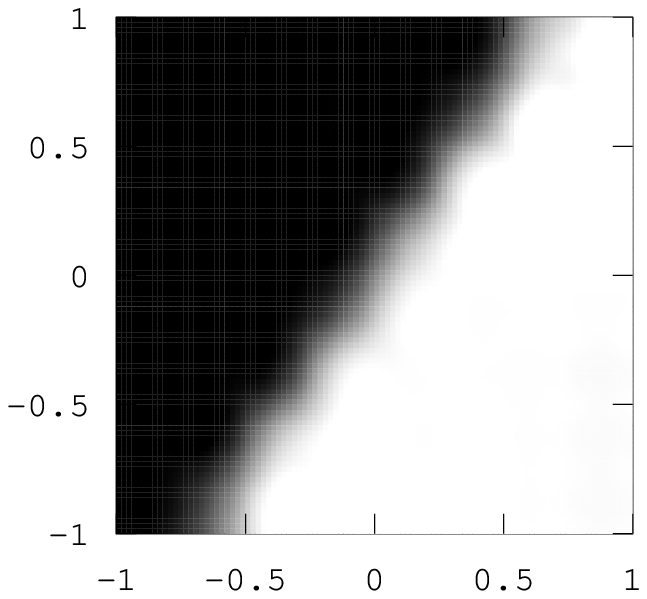}
\caption{1st variant, $t=1$\label{fig04icond}}
\end{center}\end{subfigure}
\begin{subfigure}{0.245\textwidth}\begin{center}
\includegraphics[width = \textwidth]{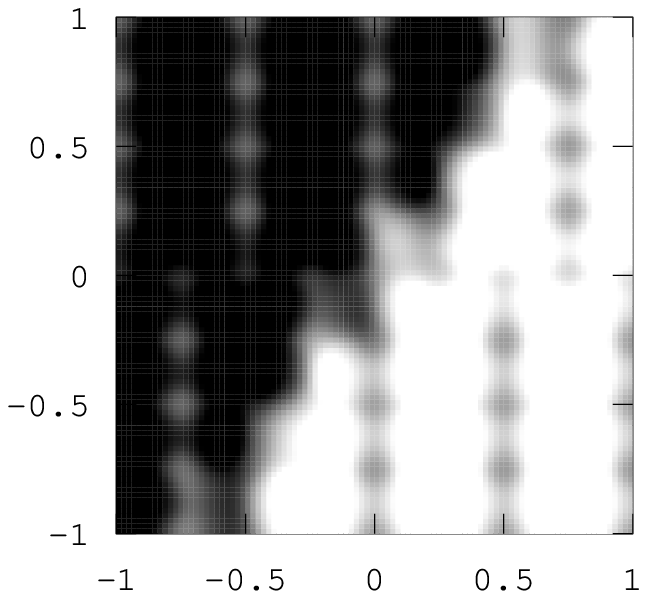}
\caption{2nd variant, $t=0.25$\label{fig05icond}}
\end{center}\end{subfigure}
\begin{subfigure}{0.245\textwidth}\begin{center}
\includegraphics[width = \textwidth]{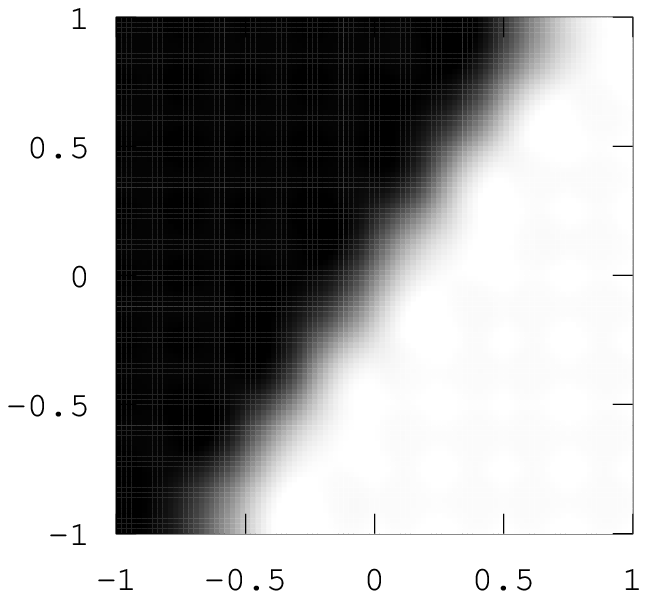}
\caption{2nd variant, $t=1$\label{fig06icond}}
\end{center}\end{subfigure}
\caption{Numerical solutions at time $t=0.25$ and $t=1$, for two initial condition variants considered in Section \ref{sec:CP.numexamples.results:exp_icond}. 
Fig. \ref{fig03icond}, \ref{fig04icond} correspond to the i.~cond. in Fig. \ref{fig:icond0_icond}; Fig. \ref{fig05icond}, \ref{fig06icond} --- to Fig. \ref{fig:icond1_icond}.
}\label{fig:sub:icondresults}
\end{center}
\end{figure}

For the both simulations, some stabilization occurred after initial period of oscillations. Stable patterns emerging after this initial period seemed to match the reference state at some rate of accuracy, at least visually. Moreover, the numerical solution generated in both simulations occurred to achieve a high level of likeness in a short time. This is visible on Figures \ref{fig03icond} - \ref{fig06icond} --- perhaps, if we had swapped the Figures \ref{fig04icond} and \ref{fig06icond}, it would be hard to notice for the Reader. So it can be a reasonable hypothesis that the rate of approximation of the reference state in the final time of the experiment is similar for two subject cases.
\begin{figure}[htb]
\begin{center}
\begin{subfigure}{0.325\textwidth}\begin{center}
\includegraphics[width = \textwidth]{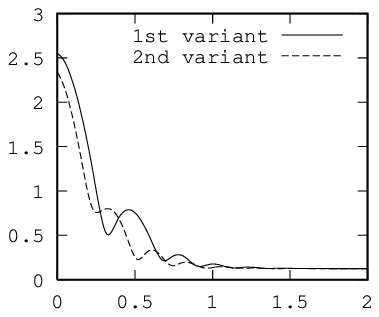}
\caption{$E_y(t)$ values (vert. axis) in time\\\rule{0pt}{1ex}\label{fig07icond}}
\end{center}\end{subfigure}
\begin{subfigure}{0.325\textwidth}\begin{center}
\includegraphics[width = \textwidth]{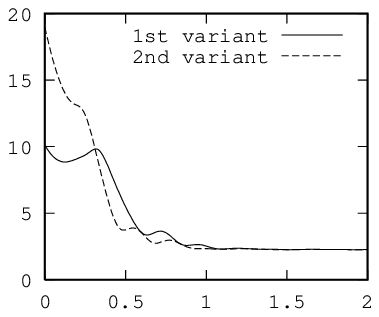}
\caption{$E^{grad}_y(t)$ values (vert. axis) in time\label{fig08icond}}
\end{center}\end{subfigure}
\caption{$E_y(t)$ and $E^{grad}_y(t)$ for time points $t=m\tau$, $m=0,\ldots,M/2$, for simulations corresponding to the two initial condition variants considered in Section \ref{sec:CP.numexamples.results:exp_icond}. The time interval for the plots is limited to $[0,2]$ for the sake of readability. No significant fluctuations of the error values were observed after time $t=2$. }\label{fig:sub:iconderror}
\end{center}
\end{figure}

The error plots in Figures \ref{fig07icond} and \ref{fig08icond} confirm that the components $y$ of the both numerical solutions tend to the same neighborhood of the reference state. 
 Moreover, the below table presents the values of error in the final time of the experiment for the respective initial conditions (with rounding to $8$ significant digits):
\begin{center}
\begin{tabular}{|l|r|r|r|}\hline
$y$ part for:	& 1st variant & 2nd variant & ratio \\
\hline
$E_{y}(T)$		&0.12569814&0.12569916&1.00000812\\
$E_{y}^{grad}(T)$	&2.26541586&2.26541453&0.99999941\\
\hline\end{tabular}
\end{center}
The ratio of the error in time $t=T$ is close to $1$. Thus, depending on particular requirements of the user of the control system, the achieved accuracy in the final time of the experiment can be considered to be similar for the two initial condition cases. 

Moreover, the error plots in the Figures \ref{fig07icond} and \ref{fig08icond} suggest that the initial error was leveled within a similar time, approximately equal $t\approx 1$, in both cases. Note however, that the latter can be guessed to be true due to the comparable rank of values of the considered initial conditions. It is reasonable to expect that if we had considered two initial conditions where one of them was defined as ten thousand times the other then the time of leveling the initial error would differ.

To conclude this experiment results, assume that we want to measure the efficiency of the control system as deviation between the solution and the reference state in the final time of the experiment. Then, a hypothesis that the \tcb{thermostats control system} under consideration have the very nice property of preserving the efficiency under perturbations of the initial condition seems reasonable. 

\subsubsection{Experiment \arabic{experiment.CP} --- various amounts of thermostats}\label{sec:CP.numexamples.results:exp_linefb}
\addtocounter{experiment.CP}{1}
This experiment is devoted to compare behavior of the \tcb{thermostats control system} for two different configurations of the control and measurement devices, where the size of the particular devices equals in both cases but their amount differs. The results can serve as a motivation for the research in the area of optimal location of the control and measurement devices.

The following data was exploited for the present experiment:
\begin{displaymath}\begin{array}{llll}
T = 4 		&\quad r_\sigma = 1/8	&\quad L_w = -10	&\quad C_{switch} = 0.2\\
D = 0.02 	&\quad C_g = 16/\pi  	&\quad H_w = 10		&\quad\kappa_{j0} = 0\ \forall_{j=1,\ldots,J}\\
\end{array}\end{displaymath}
together with the numerical scheme specification given by:
\begin{displaymath}
N = 100,\qquad M = 400,\qquad N_{Picard} = 3
\end{displaymath}
The initial condition chosen for the present experiment was as in Figure \ref{fig:icond0_icond}) and the reference state was as in Figure \ref{fig:refstate0_icond}.

The two considered configurations of the control and measurement devices, one with $J=64$ and the other with $J=20$, are presented in Figures \ref{fig:devices64_a} and \ref{fig:devices20_a}. 

\begin{figure}[htb]
\begin{center}
\begin{subfigure}{0.245\textwidth}\begin{center}
\includegraphics[width = \textwidth]{img_J64_icond0_m101_400x400gray_meanflat.eps}
\caption{64 dev., $t=1$.\label{fig03}}
\end{center}\end{subfigure}
\begin{subfigure}{0.245\textwidth}\begin{center}
\includegraphics[width = \textwidth]{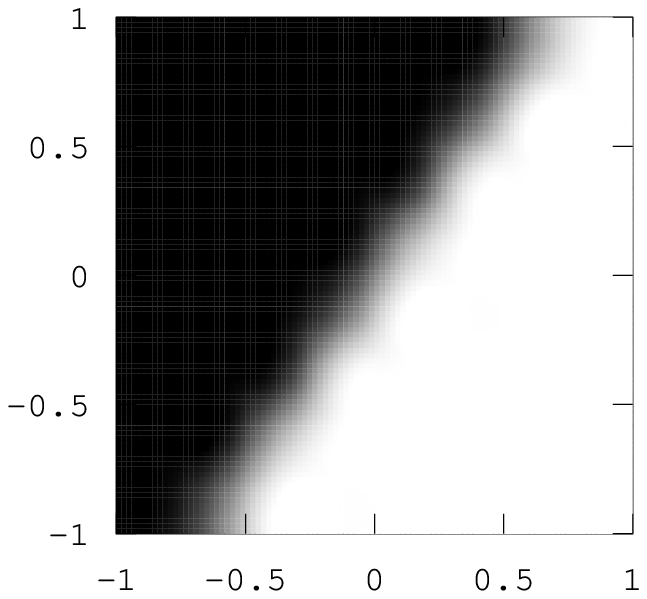}
\caption{64 dev., $t=2$.\label{fig04}}
\end{center}\end{subfigure}
\begin{subfigure}{0.245\textwidth}\begin{center}
\includegraphics[width = \textwidth]{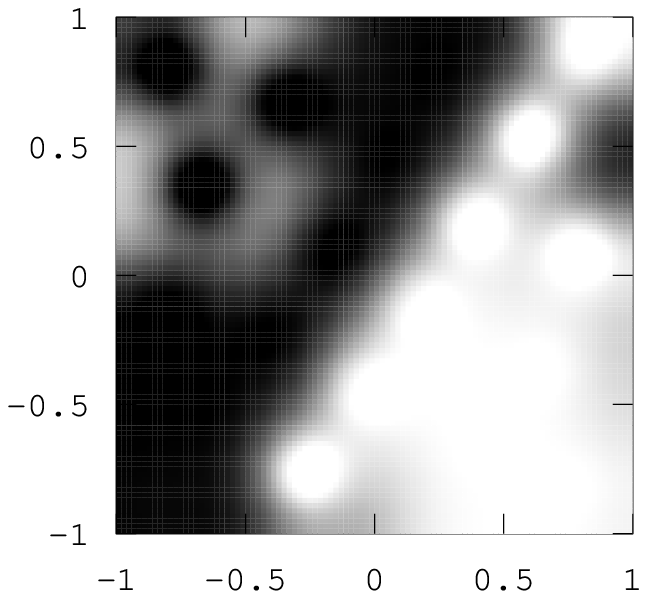}
\caption{20 dev., $t=1$.\label{fig05}}
\end{center}\end{subfigure}
\begin{subfigure}{0.245\textwidth}\begin{center}
\includegraphics[width = \textwidth]{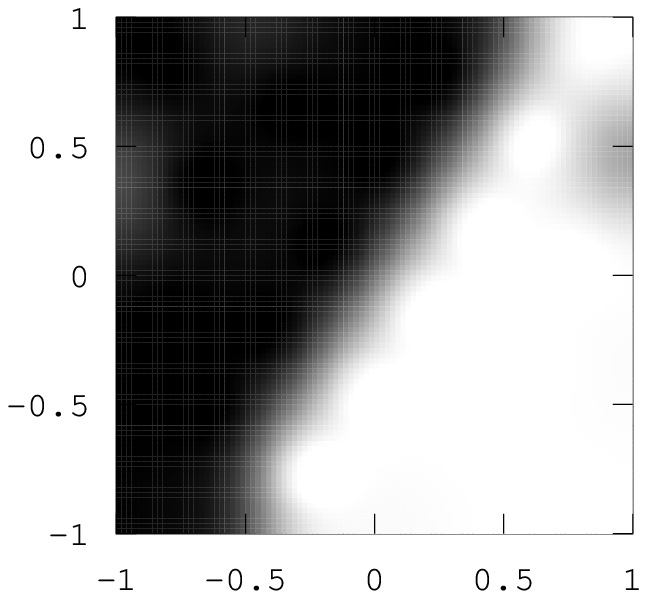}
\caption{20 dev., $t=2$.\label{fig06}}
\end{center}\end{subfigure}
\caption{Numerical solutions at time $t=1$ and $t=2$ for the devices configurations considered in Section \ref{sec:CP.numexamples.results:exp_linefb}. 
Fig. \ref{fig03}, \ref{fig04} correspond to the dev.~conf. in Fig. \ref{fig:devices64_a}; Fig. \ref{fig05}, \ref{fig06} --- to Fig. \ref{fig:devices20_a}.
}\label{fig:sub:linefbresults}
\end{center}
\end{figure}

The numerical approximation of the process occurred to stabilize quickly near the reference state for the case of $64$ devices, see Figures \ref{fig03} and \ref{fig04}. For the case of $20$ devices, as we see on the Figures  \ref{fig05} and \ref{fig06}, the process also seems to tend to some neighborhood of the reference state. However for $20$ devices, the evolution toward the reference state is slower than in case of the simulation with $64$ devices. 

\begin{figure}[htb]
\begin{center}
\begin{subfigure}{0.325\textwidth}\begin{center}
\includegraphics[width = \textwidth]{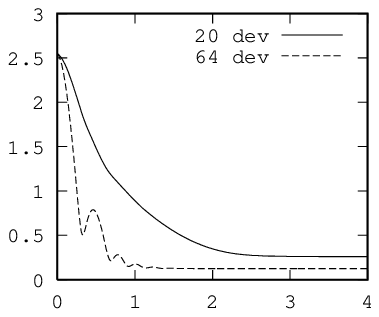}
\caption{$E_y(t)$ values (vert. axis) in time\\\rule{0pt}{1ex}\label{fig07}}
\end{center}\end{subfigure}
\begin{subfigure}{0.325\textwidth}\begin{center}
\includegraphics[width = \textwidth]{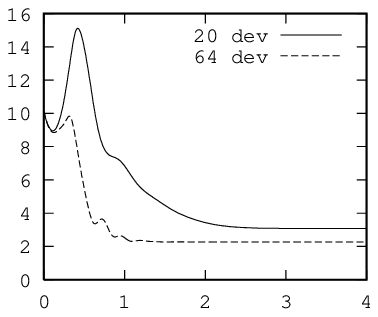}
\caption{$E^{grad}_y(t)$ values (vert. axis) in time\label{fig08}}
\end{center}\end{subfigure}
\caption{$E_y(t)$ and $E^{grad}_y(t)$ for time points  $t=m\tau$, $m=0,\ldots,M$, for simulations corresponding to the devices configurations considered in Section \ref{sec:CP.numexamples.results:exp_linefb}.}\label{fig:sub:linefberror}
\end{center}
\end{figure}

In Figures \ref{fig07} and \ref{fig08}, the error line concerning the behavior of the process in the case of $20$ control and measurement devices is always above the error line concerning $64$ control and measurement devices, in both plots. In other words --- for our situation, the error graphs stay consistent with the hypothesis that the \tcb{thermostats control system} with $20$ devices works slower than the one with $64$ devices. Moreover, the error plots suggest that the efficiency of the \tcb{thermostats control system} understood as error in time $t=T$ also differ for these two devices configurations. 
The latter is also confirmed by the below table containing the error values in time $t=T$ (with rounding to $4$ significant digits):
\begin{center}
\begin{tabular}{|l|r|r|r|}\hline
$y$ part for:	& $64$ dev. & $20$ dev. & ratio \\
\hline
$E_{y}(T)$	&0.2609&0.1257&0.4817\\
$E_{y}^{grad}(T)$	&3.0757&2.2654&0.7366\\
\hline\end{tabular}
\end{center}

To sum up the present experiment, the $20$ devices system seems to be slower and not that accurate as the $64$ devices system. 
But these results lead to further questions. The configuration of the $20$ control and measurement devices presented on Figure \ref{fig:devices20_a} has been chosen for our experiments by intuition. Hence it is natural to ask whether these devices could be localized in the domain $\Omega$ better. Or, whether we could remove more control devices and still obtain a result which would be called satisfactory with respect to some criterion. Here the realm of optimization begins.

\subsubsection{Remarks on large time behavior}\label{sec:CP.numexamples.results:asymptotics}
In the above described experiments, an observation about stabilization of the process in time was made. This allows to pose hypotheses on asymptotic properties for model (\ref{eqnmain1}) - (\ref{eqnmain3}). 

However, still it is not straightforward what should be the precise form of the hypotheses in question. The numerical prototypes in Sections \ref{sec:CP.numexamples.results:exp_unstable}, \ref{sec:CP.numexamples.results:exp_icond} and \ref{sec:CP.numexamples.results:exp_linefb} show that the behavior of the model varies depending on the configuration of its particular parameters. Intuition suggests that in the situations taken into account in the simulations in Sections \ref{sec:CP.numexamples.results:exp_icond} and \ref{sec:CP.numexamples.results:exp_linefb} the process evolves toward certain state which is relatively close to the reference state. 
For these configurations of the model existence of a one-point, or a very small attractor can be expected. 

Nevertheless, the situation in the simulations concerning the unstable equilibrium, what was the case in Section \ref{sec:CP.numexamples.results:exp_unstable}, is different. If the process states in final time presented on the Figures \ref{fig:20121025_unstable_J16_T4}, \ref{fig:20121025_unstable_J36_T4} and \ref{fig:20121025_unstable_J64_T4} are close to certain equilibrium of the process then, by symmetry, the transposed states are close to an equilibrium of the system as well. The transposed state should be the output for a simulation with transposed initial condition. By a transposed state we mean a state with swapped role of axis of the coordinate system in $\R^2$. 
In particular, in the case of the configuration of the model associated with Figure \ref{fig:devices16_a} the hypothetic attractor cannot be expected to be small in the sense of diameter, if exists. The reason for this is that in the subject case the state in the final time presented on Figure \ref{fig:20121025_unstable_J16_T4} is quite distant from its transposed state. The attractor, if exists, should contain states which are close to both the original and transposed state. 

Moreover, differences between particular simulations occur also in terms of speed of the evolution of the process toward some time invariant state. For simulations described in Sections \ref{sec:CP.numexamples.results:exp_icond}, time interval $[0,4]$ was enough for the process to achieve some state that seemed time invariant. This was also reflected on the error plots on Figures \ref{fig07icond}, \ref{fig08icond}, \ref{fig07}, \ref{fig08}. In contrary, for unstable reference state considered in experiment in Section \ref{sec:CP.numexamples.results:exp_unstable} and the cases of $J=16$ and $J=36$ control and measurement devices (Figures \ref{fig:devices16_a} and \ref{fig:devices36_a}) the evolution of the process toward the state obtained in the final time of the experiment was very slow. This is the main reason for which we have chosen the time interval for this experiment equal to $[0,24]$, what is six times longer than the time intervals in other experiments. At time $t=4$ the numerical approximations of the process in Section \ref{sec:CP.numexamples.results:exp_unstable} for $J=16$ and $J=36$ still evolved. This is visible in the error plots in Figures \ref{fig:unstable_error_square} and \ref{fig:unstable_error_gradient}. Nevertheless, the above hypotheses concerning the speed of evolution of the process base on visual inspection of the results and require further verification.

%%%CONCLUDING REMARKS
\section{Concluding remarks}\setcounter{equation}{0}
Motivated by the results in Section \ref{sec:CP.numexamples.results:exp_linefb} and according to what we stated in the introduction to this paper, we are interested in further research in the field of optimal location of control and measurement devices in the considered model. This research, which is partially completed now, takes into its scope both theoretical analysis including deriving optimality conditions as well as the numerical optimization. We intend to contain these results in our next work (in preparation). 

The problem of optimal location is not the only possible field for continuation of our work. As we already remarked in Section \ref{sec:CP.numexamples.results:asymptotics}, the investigated model with control by thermostats exhibits some asymptotic behavior and this behavior can vary as we change the model parameters. A precise description of large time properties of the model could be an additional subject for further research. The existence of a global attractor and periodic solutions for a model with control by thermostats was investigated in \cite{gj1}. However, there addressed variant of the \tcb{thermostats control system} was different than ours. This makes the possibility of transmitting there presented results on global attractors and periodic solutions to our case not obvious.

It is worth of a short remark that the questions on the asymptotic behavior 
%, as the question on existence of the global attractor and on its structure in dependence on the model parameters, 
can play a role also from the point of view of the problem of optimal location of control and measurement devices. Any optimal control problem requires an optimality criterion, describing quality of a given control. If asymptotic stabilization of the process occurs, then a valuable quality criterion of a given configuration of control and measurement devices can be the deviation between the actual process state and the reference state in the final time of the experiment. 
%, assuming that the time interval considered for a given experiment is sufficiently long for the stabilization to emerge. 
Still, if the global attractor 
%for the considered system of equations 
exists, the independence of the latter quality criterion on the initial condition cannot be expected unless the attractor contains only one point or at least has very small diameter. This kind of independence is a desired property in situations when a problem of choosing locations for control and measurement devices arises. 
For example, consider a temperature stabilization problem consisting in choosing optimal locations of control and measurement devices in given environment. The aim is to bring the temperature distribution in the environment possibly close to some prescribed temperature distribution, without \textit{a priori} knowledge on the initial temperature distribution. It is natural demand to locate the control and measurement devices in a way such that the final temperature distribution doesn't contain unpredictable bias. In the case of nontrivial asymptotic behavior, bias in the final temperature distribution can occur if perturbations in the initial temperature distribution occur.
%It is natural demand to locate the control and measurement devices in the way such that they would stabilize the temperature in the environment to a prescribed temperature distribution without unpredictable final bias. In case of nontrivial asymptotic behavior, the final bias can be caused e.g. by perturbations of initial temperature distribution.
%, caused e.g. by the day before weather conditions.
%--- e.g. in a temperature stabilization problem this property means to dispose a \tcb{thermostats control system} which, after switching it on, stabilizes the temperature in given environment to a prescribed temperature distribution without unpredictable bias, caused e.g. by the day before weather conditions.
%: it is natural that a user wants to dispose a \tcb{thermostats control system} operating in certain environment which, after switching it on, stabilizes the temperature to a prescribed temperature distribution without unpredictable bias, caused e.g. by the day before weather conditions. 
%For this reason, answering certain questions on the asymptotic behavior of the investigated model with control by thermostats, as the question on existence of the global attractor and on its structure in dependence on the model parameters, can be useful for our main research concerning optimal locations of control and measurement devices. %The matter of existence of the attractor for a model with thermostats type control was already touched in \cite{gj1} while the problem of the attractor structure, to our knowledge, still needs more exploration for our purposes.

\section{Acknowledgements}
\fmacknowledgement

{}


\begin{thebibliography}{10}

\bibitem{af}
{\sc Adams, R.~A., and Fournier, J. J.~F.}
\newblock {\em Sobolev Spaces}, second~ed.
\newblock Pure and Applied Mathematics (Amsterdam), 140. Elsevier/Academic
  Press, Amsterdam, 2003.

\bibitem{BSW02}
{\sc Behnia, B., Suthar, M., and Webb, A.~G.}
\newblock Closed-loop feedback control of phased-array microwave heating using
  thermal measurements from magnetic resonance imaging.
\newblock {\em Concepts in Magnetic Resonance (Magnetic Resonance Engineering)
  15}, 1 (2002), 101--110.

\bibitem{cc}
{\sc Cavaterra, C., and Colombo, F.}
\newblock Automatic control problems for reaction-diffusion systems.
\newblock {\em J. Evol. Equ 2}, 2 (2002), 241--273.

\bibitem{CDSD10}
{\sc Cheng, K.-S., Dewhirst, M.~W., Stauffer, P.~R., and Das, S.}
\newblock Effective learning strategies for real-time image-guided adaptive
  control of multiple-source hyperthermia applicators.
\newblock {\em Medical Physics 37}, 3 (2010), 1285--1297.

\bibitem{cgs}
{\sc Colli, P., Graselli, M., and Sprekels, J.}
\newblock Automatic control via thermostats of a hyperbolic stefan problem with
  memory.
\newblock {\em Appl. Math. Optim. 39\/} (1999), 229--255.

\bibitem{dn}
{\sc Dudziuk, G., and Niezg{\'o}dka, M.}
\newblock Closed-loop control of a reaction-diffusion system.
\newblock {\em Adv. Math. Sci. Appl. 21}, 2 (2011), 383--402.

\bibitem{ev}
{\sc Evans, L.~C.}
\newblock {\em Partial differential equations}, second~ed.
\newblock Graduate Studies in Mathematics, 19. American Mathematical Society,
  Providence, RI, 2010.

\bibitem{fh}
{\sc Friedman, A., and Hoffmann, K.-H.}
\newblock Control of free boundary problems with hysteresis.
\newblock {\em SIAM J. Control Optim. 26\/} (1988), 42--55.

\bibitem{gs2}
{\sc Glashoff, K., and Sprekels, J.}
\newblock An application of {G}licksberg's theorem to set-valued integral
  equations arising in the theory of thermostats.
\newblock {\em SIAM J. Math. Analysis 12\/} (1981), 477--486.

\bibitem{gs1}
{\sc Glashoff, K., and Sprekels, J.}
\newblock The regulation of temperature by thermostats and set-values integral
  equations.
\newblock {\em J. Integral Eqns 4\/} (1982), 95--112.

\bibitem{gj1}
{\sc Gurevich, P., and J\"ager, W.}
\newblock Parabolic problems with the {P}reisach hysteresis operator in
  boundary conditions.
\newblock {\em J. Differential Equations 247\/} (2009), 2966--3010.

\bibitem{gj2}
{\sc Gurevich, P., J\"ager, W., and Skubachevskii, A.}
\newblock On periodicity of solutions for thermocontrol problems with
  hysteresis-type switches.
\newblock {\em SIAM J. Math. Anal. 41\/} (2009), 733--752.

\bibitem{hns}
{\sc Hoffmann, K.-H., Niezg\'odka, M., and Sprekels, J.}
\newblock Feedback control via thermostats of multidimensional two-phase stefan
  problems.
\newblock {\em Nonlinear Anal.: Theory, Meth. and Appl. 15\/} (1990), 955--976.

\bibitem{rud.realcomplex.en}
{\sc Rudin, W.}
\newblock {\em Real and complex analysis}, third~ed.
\newblock McGraw-Hill Book Co., New York, 1987.

\bibitem{sho}
{\sc Showalter, R.}
\newblock {\em Monotone Operators in Banach Space and Nonlinear Partial
  Differential Equations}.
\newblock Mathematical surveys. Amer Mathematical Society, 1997.

\bibitem{sim}
{\sc Simon, J.}
\newblock Compact sets in the space ${L}^p(0,{T};{B})$.
\newblock {\em Ann. Mat. Pura Appl. (4) 146\/} (1987), 65--96.

\bibitem{Zee02}
{\sc van~der Zee, J.}
\newblock Heating the patient: a promising approach?
\newblock {\em Annals of Oncology 13}, 8 (2002), 1173--1184.

\bibitem{Yosida66}
{\sc Yosida, K.}
\newblock {\em Functional Analysis}, second~ed.
\newblock Springer-Verlag, Berlin - Heidelberg - New York, 1966.

\bibitem{zei1}
{\sc Zeidler, E.}
\newblock {\em Nonlinear Functional Analysis and its Applications. I.
  Fixed-Point Theorems}.
\newblock Springer-Verlag, New York, 1986.
\newblock translated from the German original by Wadsack P. R.

\bibitem{zei2a}
{\sc Zeidler, E.}
\newblock {\em Nonlinear Functional Analysis and its Applications. II/A. Linear
  Monotone Operators}.
\newblock Springer-Verlag, New York, 1990.
\newblock translated from the German original by the author and Boron L. F.

\end{thebibliography}
\end{document}